\algnewcommand\algorithmicparfor{\textbf{parfor}}
\algnewcommand\algorithmicpardo{\textbf{do}}
\algnewcommand\algorithmicendparfor{\textbf{end\ parfor}}
\definecolor{markercolor}{RGB}{124.9, 255, 160.65}
\pgfplotsset{width=10cm,compat=1.3}
\pgfplotsset{
tick label style={font=\small},
label style={font=\small},
legend style={font=\small}
}
\newcommand{\vect}[1]{\ensuremath\boldsymbol{#1}}
\newcommand{\td}[2]{\frac{{\rm d}#1}{{\rm d}{\rm #2}}}
\newcommand{\pd}[2]{\frac{\partial#1}{\partial#2}}
\newcommand{\mbb}[1]{\mathbb{#1}}
\newcommand{\LRp}[1]{\left( #1 \right)}
\newcommand{\LRs}[1]{\left[ #1 \right]}
\newcommand{\LRc}[1]{\left\{ #1 \right\}}
\newcommand{\Grad} {\ensuremath{\nabla}}
\newcommand{\Div} {\ensuremath{\nabla\cdot}}
\newcommand{\jump}[1] {\ensuremath{\LRs{\![#1]\!}}}
\newcommand{\avg}[1] {\ensuremath{\LRc{\!\{#1\}\!}}}
\newcommand{\Gh}{\Gamma_h}
\newcommand{\Oh}{\Omega_h}
\newtheorem{theorem}{Theorem}[section]
\newtheorem{lemma}[theorem]{Lemma}
\newenvironment{definition}[1][Definition]{\begin{trivlist}
\item[\hskip \labelsep {\bfseries #1}]}{\end{trivlist}}
\newcommand{\eval}[2][\right]{\relax
  \ifx#1\right\relax \left.\fi#2#1\rvert}
\newcommand{\edit}[1]{#1}
\newcolumntype{C}[1]{>{\centering\let\newline\\\arraybackslash\hspace{0pt}}m{#1}}
\newcommand*\diff[1]{\mathop{}\!{\mathrm{d}#1}}
\renewcommand\d[1]{\mspace{6mu}\mathrm{d}#1\@ifnextchar\d{\mspace{-3mu}}{}}
\begin{document}

\begin{frontmatter}
\author[rice]{Jesse Chan\corref{cor1}}
\ead{Jesse.Chan@rice.edu}
\cortext[cor1]{Principal Corresponding author}
\author[rice]{Zheng Wang}
\ead{zw14@caam.rice.edu}
\author[total]{Russell J.\ Hewett}
\ead{russell.hewett@total.com}
\author[vt]{T. Warburton}
\ead{tcew@vt.edu}
\address[rice]{Department of Computational and Applied Mathematics, Rice University, 6100 Main St, Houston, TX, 77005}
\address[vt]{Department of Mathematics, Virginia Tech, 225 Stanger Street Blacksburg, VA, 24061-0123}
\address[total]{TOTAL E\&P Research and Technology USA, 1201 Louisiana St, Houston, TX 77002}

\title{Reduced storage nodal discontinuous Galerkin methods on semi-structured prismatic meshes}

\begin{abstract}
We present a high order time-domain nodal discontinuous Galerkin method for wave problems on hybrid meshes consisting of both wedge and tetrahedral elements.  We allow for vertically mapped wedges which can be deformed along the extruded coordinate, and present a simple method for producing quasi-uniform wedge meshes for layered domains.  We show that standard mass lumping techniques result in a loss of energy stability on meshes of vertically mapped wedges, and propose an alternative which is both energy stable and efficient.  High order convergence is demonstrated, and comparisons are made with existing low-storage methods on wedges.  Finally, the computational performance of the method on Graphics Processing Units is evaluated.  
\end{abstract}

\end{frontmatter}




\section{Introduction}

While time-dependent wave propagation and seismic applications have traditionally utilized high order finite difference solvers, interest in high order finite element methods in seismic imaging has grown over the past two decades, in part due to the ease of adapting finite element methods to complex geometries.  However, compared to finite difference methods, a drawback of traditional finite element methods for time-domain wave propagation problems is that a global mass matrix system must be solved at every timestep.  Several new high order finite element methods were introduced to address this issue.  Among these methods, a commonly used high order method is the spectral element method (SEM) \cite{komatitsch1999introduction, karniadakis2013spectral}, which avoids the solution of a global system by employing a diagonal mass-lumped approximation to the mass matrix.  However, the use of SEM is currently limited to unstructured hexahedral meshes, which can be difficult to construct for arbitrary topologies.  

Unlike spectral elements methods, high order discontinuous Galerkin (DG) methods \cite{hesthaven2007nodal} can use more general meshes.  Like spectral methods, DG methods avoid the inversion of a globally coupled mass matrix.  Additionally, due to their low communication costs and high arithmetic intensity, time-domain DG methods have also been shown to be well-suited to parallelization, both on distributed and many-core architectures.  Kl\"ockner et al.\ presented an efficient implementation of quadrature-free nodal DG methods for tetrahedral meshes on a single Graphics Processing Unit (GPU) \cite{klockner2009nodal}, while the scalability of the multi-GPU case was investigated in \cite{godel2010scalability}.  

The utility and efficiency of GPU-accelerated nodal discontinuous Galerkin methods for seismic applications on tetrahedral meshes is explored in detail by Modave et al.\ in \cite{modave2015nodal, modave2016gpu}.  However, the GPU implementation of DG methods on non-tetrahedral elements requires some care, as storage requirements for such elements can be quite large, especially at high orders of approximation.  Since many-core accelerators typically have a relatively small amount of on-device storage, storage-heavy schemes can greatly limit the maximum problem size which is executable on a single GPU.  Furthermore, this can be detrimental to performance, as accelerators such as GPUs typically require sufficiently large problem sizes to operate at peak efficiency.  

In \cite{chan2015gpu}, a GPU-accelerated time-domain DG method was introduced for hybrid meshes consisting of hexahedra, tetrahedra, wedge (or prismatic) and pyramid elements.  This method addressed storage costs on non-tetrahedral elements.  However, to accomodate general wedge elements, the method presented in \cite{chan2015gpu} requires the use the rational Low-storage Curvilinear DG (LSC-DG) basis \cite{warburton2013low}, which mitigates the need to explicitly store elemental matrices for wedges in DG methods.  This choice of basis for wedge elements in turn requires the use of quadrature-based \textit{a priori} stable variational formulations to maintain energy stability.  Additionally, high order convergence under LSC-DG bases requires the use of mesh refinement strategies which result in asymptotically affine elements.

In this paper, we investigate the efficient GPU implementation of nodal DG methods on meshes consisting of both wedge and tetrahedral elements for geophysical applications.  The focus is in the efficient solution of wave problems on domains which exhibit either layered features or anisotropy in one direction.  Domains of this nature are well-suited to meshes of wedge elements.  For example, Lapin et al.\ \cite{lapin2003joint} utilized meshes consisting of wedges for seismic inversion; however, their focus was in the resolution of interfaces in velocity models, while the focus of this work is the efficient implementation of a time-domain solver for wave propagation and geophysical applications on such meshes.  Prismatic meshes have also been adopted in marine applications for resolving topographical variations at ocean floors \cite{wang2008finite, blaise2010discontinuous}. 

In this work, we restrict ourselves to semi-structured prismatic meshes consisting of \textit{vertically mapped} wedges, which are deformed from their reference configuration only along the vertical (extruded) coordinate.  By taking advantage of the geometric mappings of such elements, we construct a limited storage, quadrature-free nodal DG method on meshes of tetrahedra and vertically mapped wedge elements.  The proposed method addresses issues of robustness to unstructured mesh refinement present for the LSC-DG wedge basis \cite{warburton2013low, chan2015gpu}, and issues of energy stability associated with mass-lumped approximations.  

The paper is structured as follows: we review the continuous and discrete formulations of time domain DG methods for the  wave equation in Section~\ref{sec:form}.  In Section~\ref{sec:space}, we show properties of {vertically mapped} wedge elements which allow for simplifications of the elemental matrices associated with time domain DG methods, and present a limited-storage method for the acoustic wave equation on such meshes.  In Section~\ref{sec:mesh}, we present a simple method to construct hybrid meshes of vertically mapped wedges and tetrahedra for layered media.  Section~\ref{sec:num} gives numerical results confirming the high order convergence of our method, as well as an analysis of the computational performance of the solver at different orders of approximation.  Comparisons are drawn between the efficiency of the proposed limited-storage method and LSC-DG on semi-structured prismatic meshes.

\section{Discontinuous Galerkin formulation}
\label{sec:form}
This work is concerned with numerical solutions of the acoustic wave equation.  This equation is posed on a domain $\Omega$  with boundary $\partial \Omega$.  In first order form, the homogeneous acoustic wave equation is
\begin{align*}
\frac{1}{\kappa}\pd{p}{\tau}{} + \Div \bm{u} &= 0\\
\rho\pd{\bm{u}}{\tau}{} + \Grad p &= 0,
\end{align*}
where $\tau$ is time, $p$ is pressure, $\bm{u}$ is the vector velocity $(u_x,u_y,u_z)$, and $\rho$ and $\kappa$ are density and bulk modulus, respectively.  We assume also a triangulation of the domain $\Omega_h$ consisting of elements $D^k$ with faces $f$.  Additionally, we assume that $\rho$ and $\kappa$ are constant over each element, though they may vary spatially from element to element.  

Let $f$ be a face between \edit{an element $D^k$ and its neighbor $D^{k^+}$}. We define $(p^-,\bm{u}^-)$ as the restriction of the pressure and velocity respectively on the element $D^k$ to the face $f$.  \edit{Similarly, we define test functions $(\phi^-, \bm{\psi}^-)$ with support on $D^k$}.  Likewise, we define $(p^+,\bm{u}^+)$ as the restriction of the solution on the element $D^{k^+}$ to $f$.  The jump of $p$, the normal jump of $\bm{u}$, and average of $\bm{u}$ are then defined as
\[
\jump{p} = p^+ - p^-, \qquad \jump{\bm{u}} = \bm{u}^+ - \bm{u}^-, \qquad \avg{\bm{u}} = \frac{\bm{u}^+ + \bm{u}^-}{2}.
\]
By defining $\vect{n}^-$ as the outward unit normal on a given face, the variational formulation for the discontinuous Galerkin method is given over each element $D^k$ as
\begin{align}
\int_{D^k} \frac{1}{\kappa}\pd{p}{\tau}{}\phi^- \diff x &= -\int_{D^k} \edit{\LRp{\Div\bm{u}}}\phi^- \diff x + \int_{\partial D^k} \frac{1}{2}\LRp{\tau_p\jump{p} - \bm{n}\cdot \jump{\bm{u}} }\phi^- \diff x  \nonumber \\ 
\int_{D^k} \rho\pd{\bm{u}}{\tau}{}\bm{\psi}^- \diff x &= - \int_{D^k} \Grad p \cdot  \bm{\psi}^- \diff x + \int_{\partial D^k} \frac{1}{2}\LRp{\tau_u \jump{\bm{u}}\cdot \bm{n}^- - \jump{p}}\bm{\psi}^-\cdot \bm{n}^- \diff x,
\label{eq:form}
\end{align}
 where $c^2 = \kappa/\rho$ is the speed of sound and $\tau_p = 1/\avg{\rho c}$, $\tau_u = \avg{\rho c}$.  This formulation can be shown to be energy stable for any choice of $\tau_p, \tau_u$ \cite{warburton2013low}, and the specific choice of $\tau_p, \tau_u$ reduces the numerical flux to the upwind fluxes (as determined by the solution of a Riemann problem) for constant $\rho, c$.  
The above formulation is referred to as the ``strong'' DG formulation \cite{hesthaven2007nodal}, where integration by parts is used to ensure all derivatives are applied to the solution variables instead of the weighting functions $\phi$ and $\bm{\psi}$.  

Finally, for this work, we assume homogeneous Dirichlet boundary conditions $p=0$ on $\partial \Omega$.  These are enforced through reflection conditions at boundary faces $f \in \partial \Omega$
\[
\left.p^+\right|_{f} = -\left.p^-\right|_{f}, \qquad \left.\bm{u}^+\right|_{f} = \left.\bm{u}^-\right|_{f}.
\]

\subsection{Discrete formulation}
\label{sec:discrete}

We seek polynomial solutions $(p,\bm{u})$ over each element $D^k$ such that $(p,\bm{u})$ satisfy (\ref{eq:form}).  We assume that $p\in V_h\LRp{D^k}$ and ${u}_x, u_y,u_z \in V_h\LRp{D^k}$ , where $V_h\LRp{D^k}$ is a local approximation space on $D^k$ with dimension $N_p$ and basis $\LRc{\phi_i}_{i=1}^{N_p}$.  In Section~\ref{sec:space}, we make precise the space spanned by this basis, and in Section~\ref{sec:basis} we specify to nodal (Lagrange) polynomial bases.   

We assume that each physical element $D^k$ is the image of a reference element $\widehat{D}$, and that a geometric mapping $\bm{\Phi}^k$ exists such that 
\[
\LRp{x,y,z} = \bm{\Phi}^k\LRp{r,s,t}, \qquad \LRp{x,y,z}\in D^k, \qquad \LRp{r,s,t} \in \widehat{D}. 
\]  
We assume non-degenerate mappings such that the Jacobian $J^k$ of this geometric mapping is greater than zero.  Physical approximation spaces are then defined over each element as mappings of a reference approximation space as follows.  Let $\LRc{\phi_i}_{i=1}^{N_p}$ be some basis for a reference approximation space over $\widehat{D}$; then, the physical approximation space over $D^k$ is defined by mappings of this reference space
\[
V_h\LRp{D^k} = {\rm span}\LRc{ \phi_1 \circ \LRp{\bm{\Phi}^k}^{-1}, \ldots, \phi_{N_p} \circ \LRp{\bm{\Phi}^k}^{-1}},
\]
\edit{and each physical basis function is defined as the composition of a reference basis function and the mapping $\bm{\Phi}^k$ to $D^k$.}
Derivatives are computed through reference derivatives and the chain rule.  

The discrete formulation of the DG method is given most simply in terms of mass, (weak) derivative, and lift matrices.  The mass matrix $\bm{M}^k$ and face mass matrix $\bm{M}^{k}_f$ for the element $D^k$ are defined as
\begin{align*}
\LRp{\bm{M}^k}_{ij} &= \int_{D^k}\phi_j \phi_i = \int_{\widehat{D}}{ \phi_j \phi_i} J^k\\
\LRp{\bm{M}^{k}_f}_{ij} &= \int_{\partial D^{k, f}} \phi_j \phi_i = \int_{\widehat{D}_f}  \phi_j \phi_i J^{k}_f.
\end{align*}
where $J^{k}_f$ is the Jacobian of the mapping from the face of a reference element $\widehat{D}^k$ to the face of a physical element $D^{k}_f$. 

We also define weak differentiation matrices $\bm{S}^k_x, \bm{S}^k_y, \bm{S}^k_z$ with entries
\begin{align*}
\LRp{\bm{S}^k_x}_{ij} = \int_{\widehat{D}} \pd{\phi_j}{x} \phi_i J^k, \qquad \LRp{\bm{S}^k_y}_{ij} = \int_{\widehat{D}} \pd{\phi_j}{y} \phi_i J^k, \qquad \LRp{\bm{S}^k_z}_{ij} = \int_{\widehat{D}} \pd{\phi_j}{z} \phi_i J^k.
\end{align*}

The discrete DG formulation \edit{over the $k$th element} may then be given in terms of these matrices
\begin{align*}
\frac{1}{\kappa}\edit{\bm{M}^k}\td{\bm{p}}{\tau} &= \LRp{\bm{S}_x^k\bm{U}_x + \bm{S}_y^k\bm{U}_y + \bm{S}_z^k\bm{U}_z}  + \sum_{f=1}^{N_{\text{faces}}}\bm{M}^k_f F_p(\bm{p}^-,\bm{p}^+,\bm{U}^-,\bm{U}^+),\\
\rho \edit{\bm{M}^k}\td{\bm{U}_x}{\tau} &= \bm{S}_x^k \bm{p} + \sum_{f=1}^{N_{\text{faces}}}  \bm{n}_x\bm{M}^k_f  F_{u}(\bm{p}^-,\bm{p}^+,\bm{U}^-,\bm{U}^+)\\
\rho \edit{\bm{M}^k}\td{\bm{U}_y}{\tau} &= \bm{S}_y^k \bm{p} + \sum_{f=1}^{N_{\text{faces}}}  \bm{n}_y\bm{M}^k_f  F_{u}(\bm{p}^-,\bm{p}^+,\bm{U}^-,\bm{U}^+)\\
\rho \edit{\bm{M}^k}\td{\bm{U}_z}{\tau} &= \bm{S}_z^k \bm{p} + \sum_{f=1}^{N_{\text{faces}}}  \bm{n}_z\bm{M}^k_f  F_{u}(\bm{p}^-,\bm{p}^+,\bm{U}^-,\bm{U}^+).
\end{align*}
where $\bm{U}_x,\bm{U}_y,\bm{U}_z$ are nodal values for $u_x,u_y,u_z$.  Let $D^{k^+}$ be the neighbor to $D^k$ across a face $f$; then, $\bm{p}^-$ \edit{are} face nodal values for $p$ on $D^k$, while $\bm{p}^+$ are face nodal values for $p$ on $D^{k^+}$.    The fluxes $F_p,F_u$ are defined such that 
\begin{align*}
\LRp{ \bm{M}^k_f F_p(\bm{p}^-,\bm{p}^+,\bm{U}^-,\bm{U}^+)}_i &= \int_{f_{D^k}} \frac{1}{2}\LRp{\tau_p \jump{p} - \bm{n}^-\cdot\jump{\bm{u}}}\phi_i^-\\ 
\LRp{ \bm{n}_i \bm{M}^k_f F_u(\bm{p}^-,\bm{p}^+,\bm{U}^-,\bm{U}^+)}_i &= \int_{f_{D^k}} \frac{1}{2}\LRp{\tau_u\jump{\bm{u}} \cdot \bm{n}^- - \jump{p}}\bm{\psi}_i^- \bm{n}_i^-.
\end{align*}
Inverting $\bm{M}^k$ results in the system of ODEs
\begin{align*}
\frac{1}{\kappa}\td{\bm{p}}{\tau} &= \LRp{\bm{D}_x^k + \bm{D}_y^k + \bm{D}_z^k} \bm{U}_j + \sum_{f=1}^{N_{\text{faces}}}\bm{L}^k_f F_p(\bm{p}^-,\bm{p}^+,\bm{U}^-,\bm{U}^+),\\
\rho \td{\bm{U}_x}{\tau} &= \bm{D}_x^k \bm{p} + \sum_{f=1}^{N_{\text{faces}}}  \bm{n}_x\bm{L}^k_f  F_{u}(\bm{p}^-,\bm{p}^+,\bm{U}^-,\bm{U}^+)\\
\rho \td{\bm{U}_y}{\tau} &= \bm{D}_y^k \bm{p} + \sum_{f=1}^{N_{\text{faces}}}  \bm{n}_y\bm{L}^k_f  F_{u}(\bm{p}^-,\bm{p}^+,\bm{U}^-,\bm{U}^+)\\
\rho \td{\bm{U}_z}{\tau} &= \bm{D}_z^k \bm{p} + \sum_{f=1}^{N_{\text{faces}}}  \bm{n}_z\bm{L}^k_f  F_{u}(\bm{p}^-,\bm{p}^+,\bm{U}^-,\bm{U}^+).
\end{align*}
where we have introduced differentiation matrices $\bm{D}^k_x, \bm{D}^k_y, \bm{D}^k_z$  on the element $D^k$
\[
\bm{D}^k_x = \LRp{\bm{M}^k}^{-1} \bm{S}^k_x, \qquad \bm{D}^k_y = \LRp{\bm{M}^k}^{-1} \bm{S}^k_y, \qquad \bm{D}^k_z = \LRp{\bm{M}^k}^{-1} \bm{S}^k_z
\]
and lift matrices $\bm{L}^k_f$ for each face $f$ of the element $D^k$
\[
\bm{L}^k_f = \LRp{\bm{M}^k}^{-1}\bm{M}^k_f.
\]
Using explicit time stepping methods, the solution can be evolved in time through evaluations of local right hand sides for each element, without needing to invert a large global system of equations.  

\section{Hybrid tet-wedge meshes}
\label{sec:space}
In this section, we specialize to meshes whose elements consist of tetrahedra and a restricted class of wedges.  
We define the space of degree $N$ polynomials over a tetrahedron $\mathcal{T}$ as 
\[
P^N\LRp{\mathcal{T}} = \LRc{x^i y^j z^k, \qquad 0\leq i+j+k\leq N}, \qquad \LRp{x,y,z} \in \mathcal{T}.  
\]
This space has dimension $N_p = (N+1)(N+2)(N+3)/6$.  The space of degree $N$ polynomials for a wedge $\mathcal{W}$ is defined similarly
\[
P^N\LRp{\mathcal{W}} = \LRc{x^iy^jz^k, \qquad 0\leq i+j\leq N, \qquad 0 \leq k \leq N}, \qquad \LRp{x,y,z} \in \mathcal{W},
\]
with dimension $N_p = (N+1)^2(N+2)/2$.  Alternatively, the space of polynomials for the wedge may be defined as the tensor product of degree $N$ polynomials over the reference bi-unit triangle $\widehat{T}$
\[
P^N\LRp{\widehat{T}} = \LRc{x^i y^j, \quad 0\leq i+j\leq N}, \qquad \LRp{x,y} \in \widehat{T} =  \LRc{ -1 \leq y \leq 1, -1 \leq x \leq 1-y}
\]
and the space of degree $N$ polynomials $P^N\LRp{[-1,1]}$ over the bi-unit interval.  

In this work, we assume low-order geometric mappings for all elements.  We refer to such elements as \emph{vertex-mapped}, since specifying physical vertex positions uniquely determines $\bm{\Phi}^k$ for $\bm{\Phi}^k \in P^1$ for tetrahedral elements and $\bm{\Phi}^k \in P^1(\widehat{T}) \times P^1([-1,1])$ for wedge elements.    Additionally, we assume high order reference approximation spaces for both wedges and tetrahedra.  Under this assumption, the physical approximation space $V_h\LRp{D^k}$ over $D^k$ is 
\[
V_h\LRp{D^k} = \bm{\Phi}^k \circ V_h\LRp{\widehat{D}}.
\]
which guarantees high order accuracy through reproducability of polynomials of total degree $N$ \cite{botti2012influence}.  

\subsection{Nodal bases for tetrahedra and wedges}
\label{sec:basis}
In this work, we take the basis $\LRc{\phi_i}_{i=1}^{N_p}$ to be a nodal (Lagrange) polynomial basis $\LRc{\ell_i}_{i=1}^{N_p}$, such that for a set of points $\LRc{r_i,s_i,t_i}_{i=1}^{N_p} \in \widehat{D}$, 
\[
\ell_j(r_i,s_i,t_i) = \delta_{ij}, \qquad 1\leq i,j \leq N_p.
\]
For quadrature-free DG methods, nodal bases reduce the work involved in the computation of numerical fluxes at face points, since nodal values on a given face can be extracted from the degrees of freedom without additional computation.  

For the tetrahedron, nodal bases are defined using Warp and Blend points \cite{warburton2006explicit}, which are optimized for interpolation quality and numerical stability.  For wedges, a nodal basis is defined by taking the tensor product of nodes on the triangular face of the tetrahedron with degree $N$ Gauss-Legendre-Lobatto quadrature nodes in the vertical direction, as shown in Figure~\ref{fig:wn}.  Since wedges and tetrahedra are connected through triangular faces, this ensures conformity between wedge and tetrahedral elements.  This construction also implies that Lagrange wedge basis functions are tensor products of triangular and one-dimensional Lagrange basis functions $\ell^{\rm tri}_i(r,s)$ and $\ell^{\rm 1D}(t)$ (respectively)
\[
\ell_{ij}(r,s,t) = \ell^{\rm tri}_i(r,s) \ell^{\rm 1D}_j(t).
\]
We index nodal basis functions on the wedge by $ij$
\[
\ell_{ij}(r,s,t), \qquad 0\leq i \leq (N+1)(N+2)/2, \qquad 0\leq j \leq N,
\]
where $(N+1)(N+2)/2$ is the number of nodes in a triangular ``slice'' of the wedge, $i$ is the index for nodes within a triangular slice, and $j$ enumerates the nodes along the extruded coordinate of the wedge.  

\begin{figure}
\centering
\subfloat{\includegraphics[width=.425\textwidth]{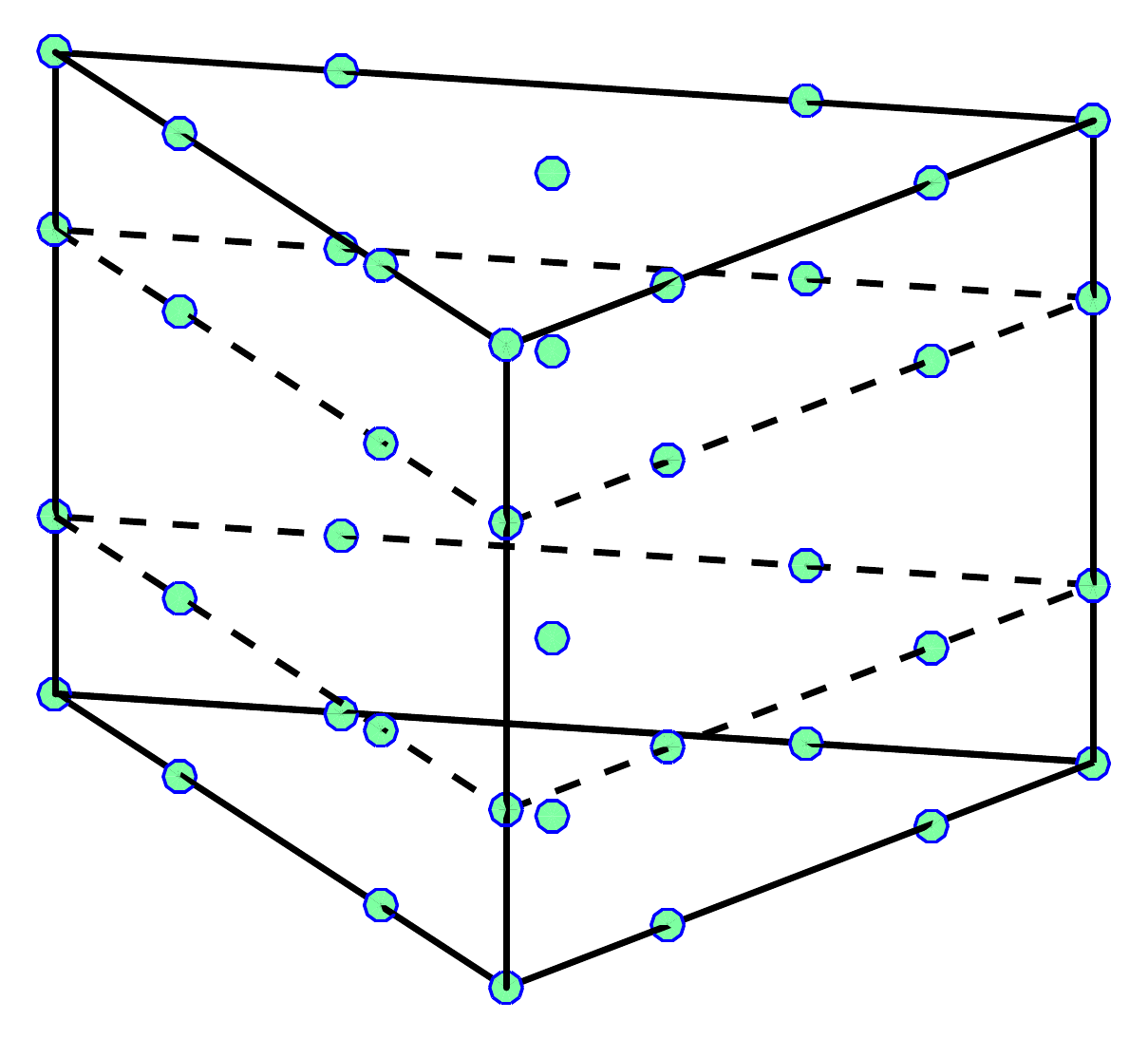}}
\hspace{4em}
\subfloat{\includegraphics[width=.3\textwidth]{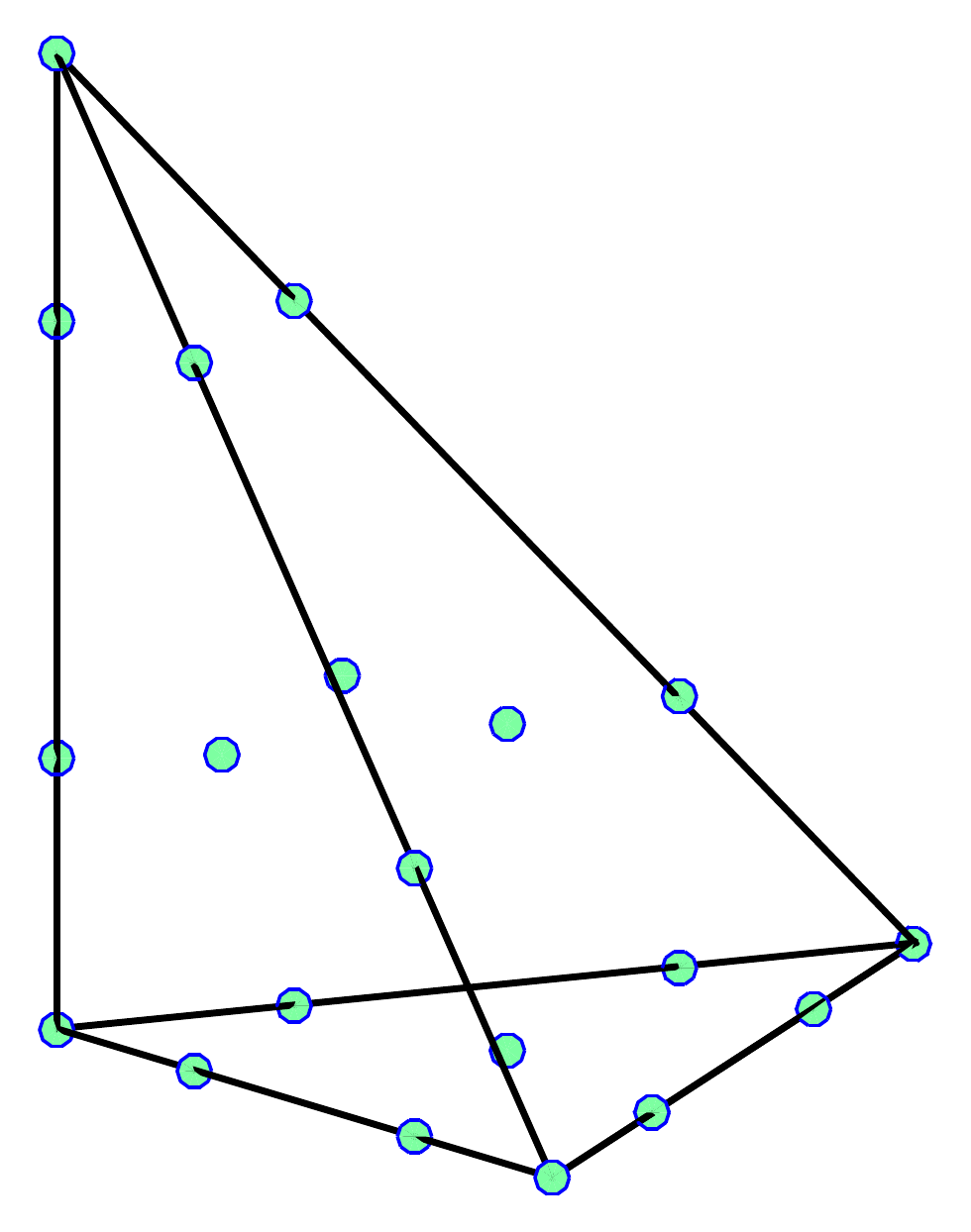}}
\caption{Interpolation points for the wedge and tetrahedron for $N=3$.}
\label{fig:wn}
\end{figure}

\subsection{Geometric factors for vertically mapped wedges}

Because any two vertex-mapped tetrahedra are affinely related, geometric factors for tetrahedra are constant from element to element.  This results in negligible storage costs, as physical matrices are simply scaled combinations of reference mass, derivative, and lift matrices.  However, as noted in \cite{chan2015gpu}, polynomial bases which result in diagonal mass matrices for general vertex-mapped wedges are not currently available.  As a result, the cost of storing wedge mass matrix inverses or lift matrices can be excessive, especially at high orders of approximation.  This is problematic for efficient implementations on many-core architectures (such as GPUs) that typically have very limited memory.  In this work, we consider hybrid tet-wedge meshes containing ``vertically-mapped'' wedges, or wedges which are deformed from their reference configuration along in the vertical (extruded) coordinate.  Restricting to such wedges allows us to greatly decrease memory costs for time-domain DG methods.  
 
 Physical wedges are images of reference wedges under the local mapping $\bm{\Phi}^k(r,s,t) = \sum_{i = 1}^6 \bm{\nu}_i v_i(r,s,t)$, where $\bm{\nu}_i \in \mathbb{R}^3$ are coordinates of the six vertices on a wedge, and $v_i(r,s,t)$ are low-order vertex basis functions
\begin{align*}
v_1(r,s,t) &= (1-r-s)(1-t)\\
v_2(r,s,t) &= r(1-t)\\
v_3(r,s,t) &= s(1-t)\\
v_4(r,s,t) &= (1-r-s)t\\
v_5(r,s,t) &= rt\\
v_6(r,s,t) &= st.
\end{align*}
for reference coordinates $0 \leq t \leq 1$, $0 \leq r, s \leq 1$, and $0 \leq r+s \leq 1$.  We define three wedge ``vertex pairs'' $(\bm{v}_1, \bm{v}_4)$, $(\bm{v}_2, \bm{v}_5)$, and $(\bm{v}_3, \bm{v}_6)$.  These correspond to vertices of the reference wedge which have the same $r,s$ coordinates.  

The Jacobian of this mapping is defined by 
\[
J^k = {\rm det} \LRs{\pd{\bm{\bm{\Phi}^k}}{r}{}, \pd{\bm{\bm{\Phi}^k}}{s}{}, \pd{\bm{\bm{\Phi}^k}}{t}{}}.
\]
If $\bm{\nu}_i$ are taken to be the vertex positions of the reference wedge, $\bm{\bm{\Phi}^k} = \bm{1}$ by the fact that the low order wedge vertex functions $v_i(r,s,t)$ sum to unity.  

In this work, we restrict ourselves to \textit{vertically mapped} wedges: 
\begin{definition}
Let $D^k$ be a wedge element; then, $D^k$ is a vertically mapped wedge if the $x$ and $y$ coordinates are identical for each vertex pair $(\bm{v}_1, \bm{v}_4)$, $(\bm{v}_2, \bm{v}_5)$, and $(\bm{v}_3, \bm{v}_6)$.  
\end{definition}
We note that the definition of vertically mapped wedges allows wedges whose triangular faces are co-planar; however, such elements are not admissible as they have zero volume and thus have $J^k = 0$.  

For simplicity, we denote $\pd{r}{x}$ as $r_x$, and similarly for $\pd{s}{x} = s_x, \pd{t}{x} = t_x$, and all other geometric factors.  Then, we have the following Lemma:
\begin{lemma}
\label{lemma:wedge}
For vertically mapped wedges, $J^k$ is constant in the $t$ direction.  Additionally, the geometric factors have the following properties
\begin{itemize}
\item The geometric factors $r_z, s_z = 0$,  
\item The quantities $r_x, r_y, s_x, s_y, \LRp{t_z J^k}$ are constant throughout an element
\item The quantities $t_x J^k, t_y J^k$ vary only linearly in the $t$ coordinate.
\end{itemize}
\end{lemma}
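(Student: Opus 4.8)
The plan is to write the mapping $\bm{\Phi}^k$ in components, use the vertically-mapped condition to expose a block structure in the Jacobian matrix, and then read off every claim from the adjugate formula for its inverse. Write $\bm{\nu}_i = (x_i,y_i,z_i)$ for the six vertices, so that the physical coordinates are $x = \sum_i x_i v_i$, and similarly for $y$ and $z$. First I would substitute the vertex functions $v_i$ and group the terms multiplying $(1-t)$ and $t$, giving $x = (1-t)[x_1(1-r-s)+x_2 r+x_3 s] + t[x_4(1-r-s)+x_5 r+x_6 s]$. The vertically-mapped condition reads $x_1=x_4$, $x_2=x_5$, $x_3=x_6$ (and likewise for $y$), so the two bracketed expressions coincide and the factor $(1-t)+t=1$ collapses. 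Hence $x$ and $y$ are affine functions of $(r,s)$ alone: in particular $x_t=y_t=0$, while $x_r=x_2-x_1$, $x_s=x_3-x_1$, $y_r=y_2-y_1$, $y_s=y_3-y_1$ are constant over the element.

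Next I would treat the third component, which does not simplify. Writing $z=(1-t)A(r,s)+tB(r,s)$ with $A,B$ affine in $(r,s)$, the derivative $z_t=B-A$ is affine in $(r,s)$ and independent of $t$, whereas $z_r$ and $z_s$ are linear in $t$ with constant-in-$t$ slopes. Assembling the Jacobian matrix $G=\partial(x,y,z)/\partial(r,s,t)$, the vanishing of $x_t$ and $y_t$ forces its third column to be $(0,0,z_t)^\top$; expanding the determinant along that column gives $J^k = z_t\,(x_r y_s - x_s y_r)$. Since $x_r y_s - x_s y_r$ is constant and $z_t$ is independent of $t$, the Jacobian $J^k$ is constant in $t$, which is the first assertion.

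The geometric factors are the entries of $G^{-1}=(J^k)^{-1}\operatorname{adj}(G)$, so I would compute the cofactors of $G$. The factors $r_z$ and $s_z$ come from the cofactors $C_{31},C_{32}$, each the determinant of a $2\times2$ matrix whose second column is $(0,0)^\top$ (inherited from the zeros in the third column of $G$); these vanish, giving $r_z=s_z=0$. The factors $r_x,r_y,s_x,s_y$ come from the cofactors $C_{11},C_{21},C_{12},C_{22}$, each carrying exactly one factor of $z_t$ — for instance $r_x = y_s z_t/J^k$ — and this $z_t$ cancels against the $z_t$ in $J^k$, leaving a ratio of constants (valid since $J^k>0$). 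Finally, $(t_x,t_y,t_z)$ come from the third-column cofactors $C_{13},C_{23},C_{33}$, none of which contains $z_t$: the last yields $t_z J^k = x_r y_s - x_s y_r$, a constant, while the first two yield $t_x J^k = y_r z_s - y_s z_r$ and $t_y J^k = x_s z_r - x_r z_s$. As $z_r,z_s$ are linear in $t$ and the remaining factors are constant, these two products are linear in $t$, completing the last assertion.

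The computation is essentially bookkeeping; the one genuine observation, on which the whole lemma hinges, is the cancellation of $z_t$ between the adjugate and $J^k$. The main thing to watch is keeping straight three distinct notions of regularity: quantities constant over the whole element (such as $x_r,y_s$), quantities constant in $t$ but still varying in $(r,s)$ (such as $J^k$ and $z_t$), and quantities linear in $t$ (such as $z_r,z_s$). Tracking this hierarchy correctly through the cofactor expansion is the only real subtlety.
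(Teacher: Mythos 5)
Your proof is correct and takes essentially the same route as the paper: both use the vertex-pair condition to show $x$ and $y$ are affine in $(r,s)$ with $x_t = y_t = 0$, reduce the Jacobian to $J^k = z_t\LRp{x_r y_s - x_s y_r}$ so that $z_r, z_s$ drop out, and then read the geometric factors off the cofactor formulas for the inverse Jacobian. The only difference is one of detail: you carry out explicitly the cancellation of $z_t$ between the cofactors and $J^k$ (which is indeed the point on which constancy of $r_x, r_y, s_x, s_y$ hinges), whereas the paper compresses this into ``straightforward computations give the remainder of the lemma.''
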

\begin{proof}
We may exploit the fact that the $x,y$-coordinates of different vertex pairs are identical.  For example, the $x$-coordinates of $\bm{v}_1$ and $\bm{v}_4$ are identical, implying that
\[
\bm{v}^x_1 v_1(r,s,t) + \bm{v}^x_4 v_4(r,s,t) = \LRp{\bm{v}^x_1 + \bm{v}^x_4} (1-r-s).  
\]
Similarly, the $x$-coordinates of the $\bm{v}_2, \bm{v}_5$ are identical, as are the $x$-coordinates of $\bm{v}_3,\bm{v}_6$, such that
\[
\bm{v}^x_2 v_2(r,s,t) + \bm{v}^x_5 v_5(r,s,t) = \LRp{\bm{v}^x_2 + \bm{v}^x_5} r, \qquad \bm{v}^x_3 v_3(r,s,t) + \bm{v}^x_6 v_6(r,s,t) = \LRp{\bm{v}^x_3 + \bm{v}^x_6} s.
\]
This implies that $x$ is an affine function of $r,s$, and that $\pd{x}{r}{}, \pd{x}{s}{}$ are constant throughout the wedge.  As the $y$-coordinates of each of these vertex pairs is also identical, the same argument implies that $\pd{y}{r}{}, \pd{y}{s}{}$ are also constant throughout the wedge.  Additionally, since $x,y$ are affine functions of $r,s$ only, this implies $\pd{x}{t}, \pd{y}{t} = 0$.  

The $z$-coordinates of each vertex pair are arbitrary; however, since derivatives of the vertex functions $\pd{v_i}{r}$ and $\pd{v_i}{s}$ are constant in the $r,s$ coordinates, $\pd{z}{r}$ and $\pd{z}{s}$ vary only in the $t$ coordinate.  Similarly, since all derivatives $\pd{v_i}{t}$ are constant in the $t$ coordinate, $\pd{z}{t}$ can vary only in the $r,s$ coordinates.  

Thus, using that $\pd{x}{t} = \pd{y}{t} = 0$, the expression for $J^k$ simplifies to 
\[
J^k = \pd{x}{r}{}\pd{y}{s}{}\pd{z}{t}{} - \pd{x}{s}{}\pd{y}{r}{}\pd{z}{t}{}.
\]
The partial derivatives $\pd{z}{r}, \pd{z}{s}$ are the only factors which vary in $t$; since they do not appear in the above expression, $J$ is constant along the $t$ coordinate.

The formulas for geometric factors \cite{karniadakis2013spectral, hesthaven2007nodal} are also simplified since $x_t, y_t  = 0$  
\begin{align*}
r_x &=  (y_s z_t - z_s y_t)/J^k \qquad r_y = -(x_s z_t - z_s x_t)/J^k \qquad r_z = (x_s y_t - y_s x_t)/J^k\\ 
s_x &= -(y_r z_t - z_r y_t)/J^k \qquad s_y =  (x_r z_t - z_r x_t)/J^k \qquad s_z = -(x_r y_t - y_r x_t)/J^k \\
t_x &=  (y_r z_s - z_r y_s)/J^k \qquad t_y = -(x_r z_s - z_r x_s)/J^k \qquad t_z = (x_r y_s - y_r x_s)/J^k.
\end{align*}
Straightforward computations give the remainder of the lemma.
\end{proof}

\section{Reducing storage costs for nodal DG methods on semi-structured prismatic meshes} 

Since wedges are constructed as the tensor product of a triangle and one-dimensional basis, mass lumping techniques are often employed in the extruded direction \cite{karniadakis2013spectral, kirby2000discontinuous, bergot2013higher} to reduce storage costs.  For a general wedge, mass lumping reduces the cost of explicitly storing $\LRp{\bm{M}^k}^{-1}$ from $O(N^6)$ to $O(N^5)$, and reduces the cost of explicit storage of the lift matrices $\bm{L}^k_f$ (under the use of mass-lumped surface quadratures) from $O(N^5)$ to $O(N^4)$.  However, since mass lumping for wedges corresponds to the use of underintegrated quadratures in integrals, they can introduce aliasing errors which result in non-energy stable DG methods.  This is discussed in more detail in Section~\ref{sec:lump}.  

Instead of employing mass lumping, we seek a way to reduce the storage costs of DG for semi-structured prismatic meshes.  Given Lemma~\ref{lemma:wedge}, it is possible to reduce the storage of exact quadrature DG methods for meshes with vertically mapped wedges.  This method is based on the exact DG formulation (without the use of underintegrated quadratures); however, the implementation of the discrete formulation greatly reduces the storage compared to a straightforward implementation.  

\subsection{Mass lumping and energy stability}
\label{sec:lump}

Mass lumped methods take advantage of the fact that Gauss-Legendre-Lobatto nodes are also quadrature nodes, and evaluate entries of the mass matrix using Gauss-Legendre-Lobatto quadrature in the $t$ coordinate.  The resulting entries of the wedge mass matrix are
\begin{align*}
\edit{\LRp{\bm{M}^k}_{ij,i'j'}} &= \edit{\int_{\widehat{D}} \ell_{ij}\ell_{i'j'} J = \int_{-1}^1 \int_{\widehat{T}} \ell_{ij}\ell_{i'j'} J(r,s,t)}\\
&= \edit{\sum_{m = 0}^N \int_{\widehat{T}} \ell_{ij}(r,s,t_m)\ell_{i'j'}(r,s,t_m) J(r,s,t_m) w_m}\\
&= \edit{\delta_{jj'} \int_{\widehat{T}} \ell_{ij}(r,s,t_j)\ell_{i'j'}(r,s,t_j) J(r,s,t_j) w_j},
\end{align*}
where $\widehat{T}$ is the reference triangle.  This mass-lumping procedure results in wedge mass matrices which are block diagonal, where each block is a square matrix of size $\frac{(N+1)(N+2)}{2}$.  However, since Gauss-Legendre-Lobatto rules are only exact for polynomials up to degree $(2N-1)$, the use of mass lumping introduces quadrature/aliasing errors.  For mass lumped hexes, these errors do not adversely affect energy stability \cite{kopriva2010quadrature}; however, on non-hexahedral elements, these aliasing errors can result in a discrete method which is unstable.  

Proofs of energy stability of time-domain DG methods on non-hexahedral elements typically assume exact quadrature, or require \textit{a priori} stable variational formulations in the presence of quadrature errors \cite{warburton2013low}.  Under-integration and mass lumping with the ``strong'' variational formulation can produce unstable aliasing errors.  Recall the local variational formulation in Section~\ref{sec:form}
\begin{align*}
\int_{D^k} \frac{1}{\kappa}\pd{p}{\tau}{}\phi^- \diff x &= -\int_{D^k} \Div\bm{u}\phi^- \diff x + \int_{\partial D^k} \frac{1}{2}\LRp{\tau_p\jump{p} - \bm{n}\cdot \jump{\bm{u}} }\phi^- \diff x  \\
\int_{D^k} \rho\pd{\bm{u}}{\tau}{}\bm{\psi}^- \diff x &= - \int_{D^k} \Grad p \cdot  \bm{\psi}^- \diff x + \int_{\partial D^k} \frac{1}{2}\LRp{\tau_u \jump{\bm{u}}\cdot \bm{n}^- - \jump{p}}\bm{\psi}^-\cdot \bm{n}^- \diff x.
\end{align*}
If integration by parts holds at the discrete level, the above formulation is discretely equivalent to the ``skew-symmetric'' formulation
\begin{align*}
\int_{D^k} \frac{1}{\kappa}\pd{p}{\tau}{}\phi^- \diff x &= \int_{D^k} \edit{\bm{u} \cdot \Grad} \phi^- \diff x + \int_{\partial D^k} \frac{1}{2}\LRp{\tau_p\jump{p} - 2\bm{n}\cdot \avg{\bm{u}} }\phi^- \diff x  \\
\int_{D^k} \rho\pd{\bm{u}}{\tau}{}\bm{\psi}^- \diff x &= - \int_{D^k} \Grad p \cdot  \bm{\psi}^- \diff x + \int_{\partial D^k} \frac{1}{2}\LRp{\tau_u \jump{\bm{u}}\cdot \bm{n}^- - \jump{p}}\bm{\psi}^-\cdot \bm{n}^- \diff x,
\end{align*}
then energy stability follows by taking $\phi^- = p$ and $\bm{\psi}^- = \bm{u}$ (see, for example \cite{hesthaven2007nodal,chan2016weight1}).  \edit{Pulling the time derivative out of the left hand side term and summing both equations over all elements $D^k$ yields}
\begin{align}
&\edit{\frac{1}{2}\td{}{\tau}\sum_{{D^k} \in \Omega_h} \left\{\int_{D^k} \LRp{ \frac{1}{\kappa} p^2 + \rho \bm{u}^2}\right\}}= \nonumber\\
&\edit{\frac{1}{2}\sum_{D^k\in \Oh} \int_{\partial D^k} \tau_p\jump{p}p^- + \tau_u \jump{\bm{u}}\cdot \bm{n}^- \bm{u}^-\cdot \bm{n}^-  - 2\bm{n}\cdot \avg{\bm{u}} p^- - \jump{p}\bm{u}^-\cdot \bm{n}^- \diff x.}\label{eq:estab}
\end{align}
\edit{The surface terms may be rewritten as sums over unique faces $f\in \Gh$, where $\Gh$ denotes the set of unique faces in $\Oh$.  Expanding out the face integrands in terms of $\bm{n}^-$, $(p^-,\bm{u}^-)$, and $(p^+,\bm{u}^+)$ shows that the terms which are independent of $\tau_p,\tau_u$ cancel.  The remaining penalization terms then reduce to $-\tau_p\jump{p}^2$ and $-\tau_u\jump{\bm{u}_n}^2$ over each face, and inserting them into (\ref{eq:estab}) yields}
\[
\edit{\td{}{\tau}\sum_{{D^k} \in \Omega_h} \left(\int_{D^k} \LRp{ \frac{1}{\kappa} p^2 + \rho \bm{u}^2}\right)  = -\sum_{f \in \Gh} \int_{\partial {f}} \LRp{\tau_p\jump{p}^2 + \tau_u\jump{\bm{u}_n}^2} \leq 0.}
\]
This implies that for $\tau_p, \tau_u \geq 0$, DG methods exhibit dissipative behavior, where unresolvable components of the solution corresponding to high frequency modes are damped \cite{hesthaven2002nodal, hesthaven2004high}.  More precisely, after discretizing in space using DG, the PDE reduces to a system of ODEs for the nodal values $\bm{U}$
\[
\td{\bm{U}}{\tau} = \bm{A}\bm{U},
\]
where we refer to $\bm{A}$ as the DG right-hand side matrix.  For $\tau_p, \tau_u > 0$, we refer to the flux as an \textit{upwind flux}.  For upwind fluxes, the eigenvalues of $\bm{A}$ contain negative real parts, indicating dissipation in time of the solution.  Additionally, for $\tau_p = \tau_u = 0$, the DG formulation reduces to that of a central flux, resulting in a non-dissipative method where the eigenvalues of $\bm{A}$ are purely imaginary.  \edit{In the following experiments, we build $\bm{A}$ for $\rho = c^2 = 1$, homogeneous Dirichlet boundary conditions, and mesh of 16 wedges whose vertices are randomly perturbed in the vertical direction to ensure non-constant $J^k$. }

\begin{figure}
\centering
\subfloat[Upwind flux]{\includegraphics[width=.31\textwidth]{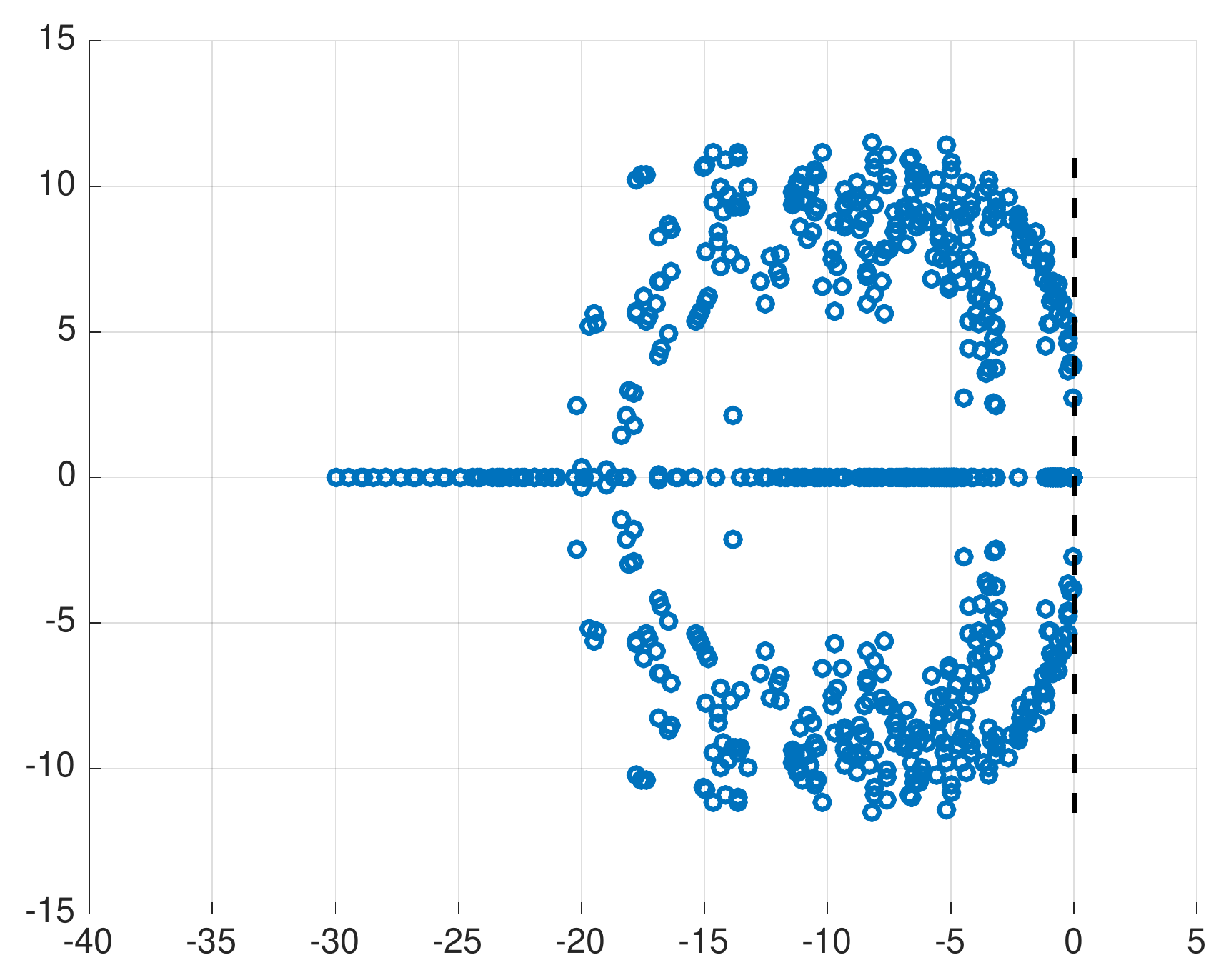}}
\hspace{.5em}
\subfloat[Upwind flux (zoom)]{\includegraphics[width=.31\textwidth]{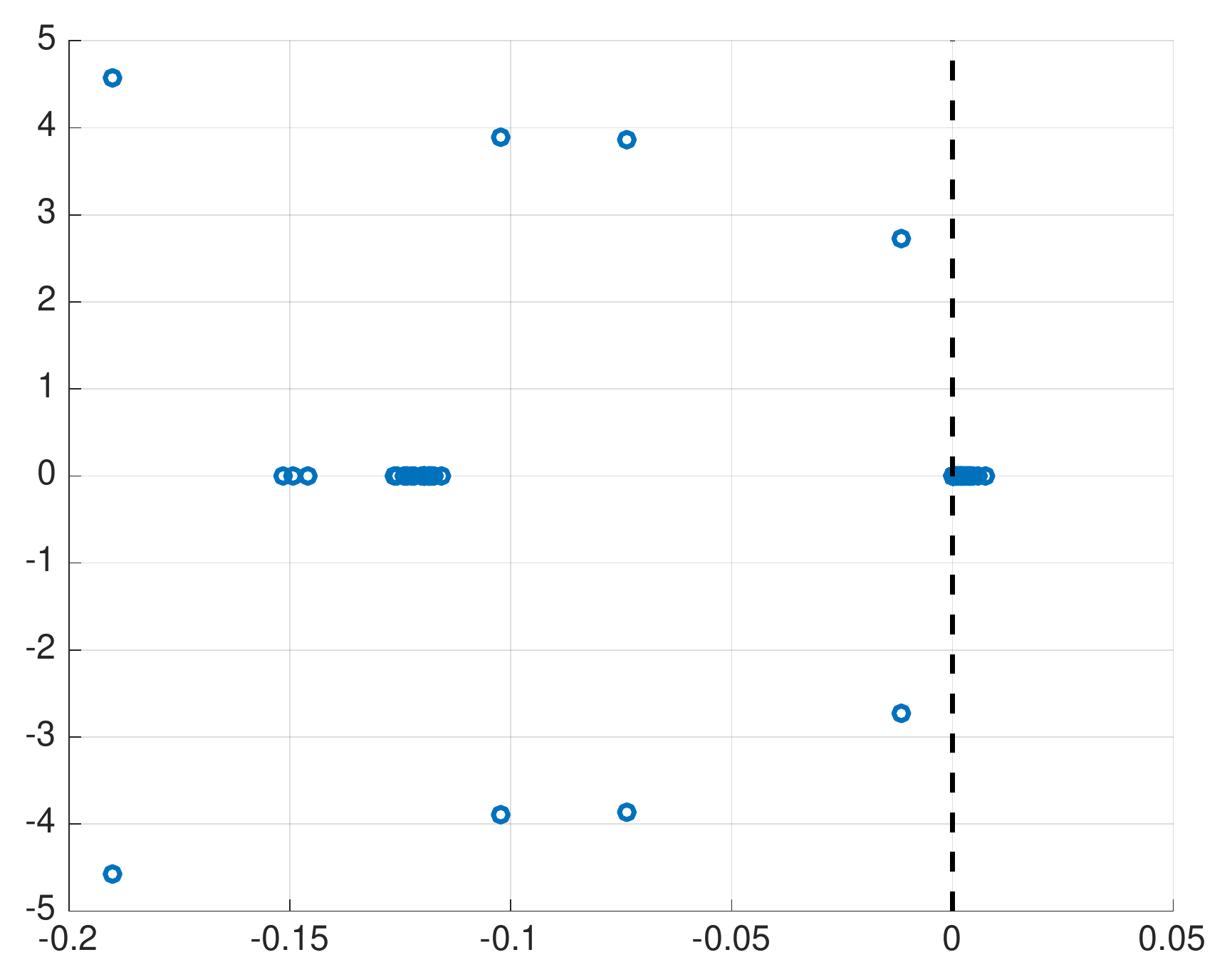}}
\hspace{.5em}
\subfloat[Central flux]{\includegraphics[width=.31\textwidth]{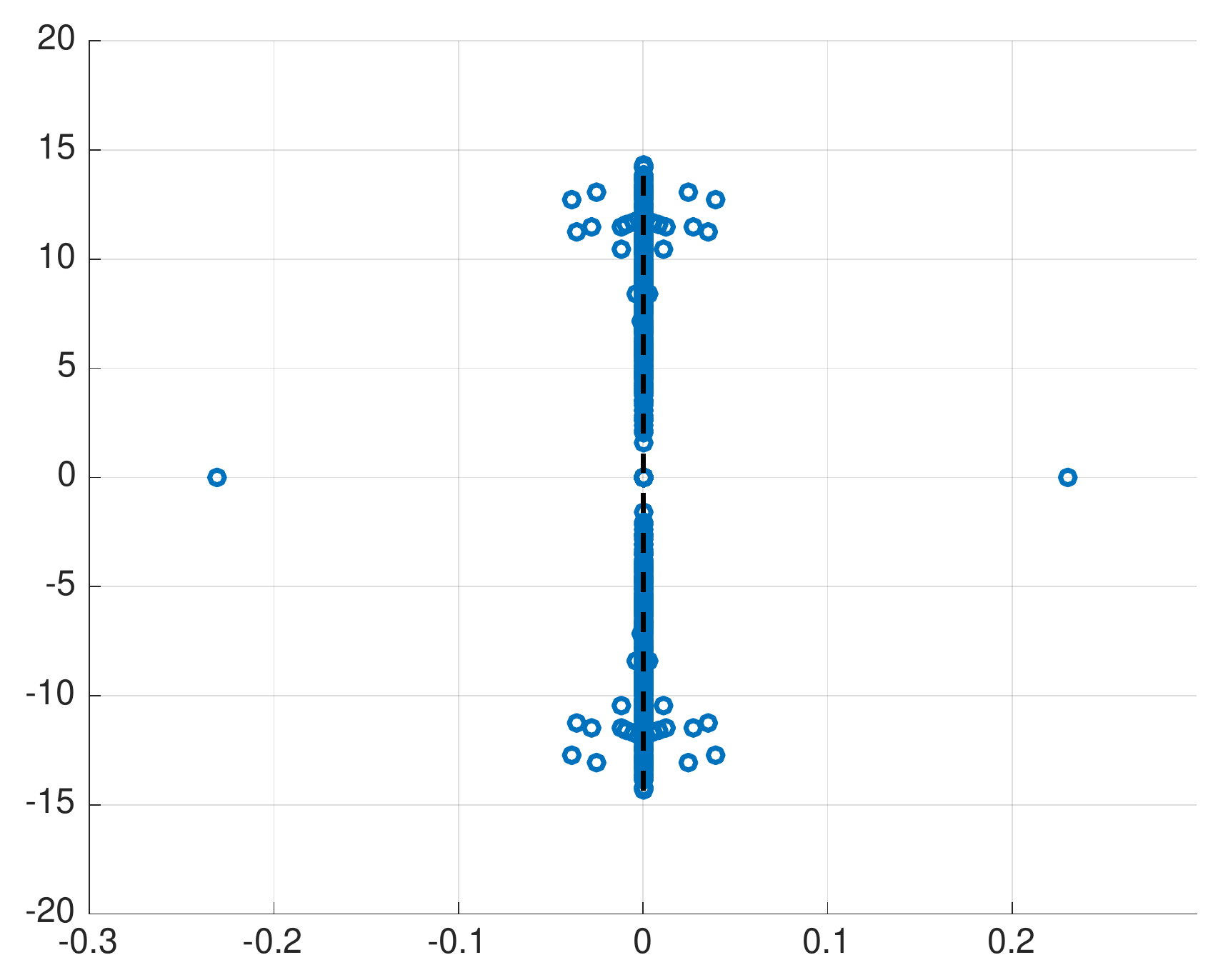}}
\caption{Eigenvalues for a mass-lumped \edit{DG} discretization \edit{on wedges} with $N=2$.  Unlike non-mass lumped discretizations, there exist  eigenvalues with positive real parts, implying that energy stability is lost due to errors introduced by underintegration.  }
\label{fig:eigs}
\end{figure}

For Discontinuous Galerkin spectral element methods (DG-SEM), which utilize mass lumping on quadrilateral or hexahedral elements, integration by parts holds under Gauss-Legendre-Lobatto quadrature and the strong and skew-symmetric formulation are discretely equivalent \cite{kopriva2010quadrature, gassner2013skew}.  This equivalence relies on the diagonal nature of the mass matrix and the exactness of Gauss-Legendre-Lobatto quadrature for degree $(2N-1)$ polynomials.  Unfortunately, the analogous statement does not hold true for mass lumped wedges.  Figure~\ref{fig:eigs} shows the spectra of the DG right hand side operator resulting from a discretization of the strong formulation using mass lumping on a mesh with vertically mapped wedges.  For both upwind and central fluxes, eigenvalues with real positive part are present, implying that the formulation is not energy stable.  In contrast, employing a non-mass lumped discretization is equivalent to using an exact quadrature rule, such that continuous integration by parts implies discrete equivalence of the strong formulation and energy-stable skew-symmetric formulation.  This is reflected in Figure~\ref{fig:eigsex}, which shows the eigenvalues of the DG operator under our method with exact quadrature.  

\begin{figure}
\centering
\subfloat[Upwind flux]{\includegraphics[width=.31\textwidth]{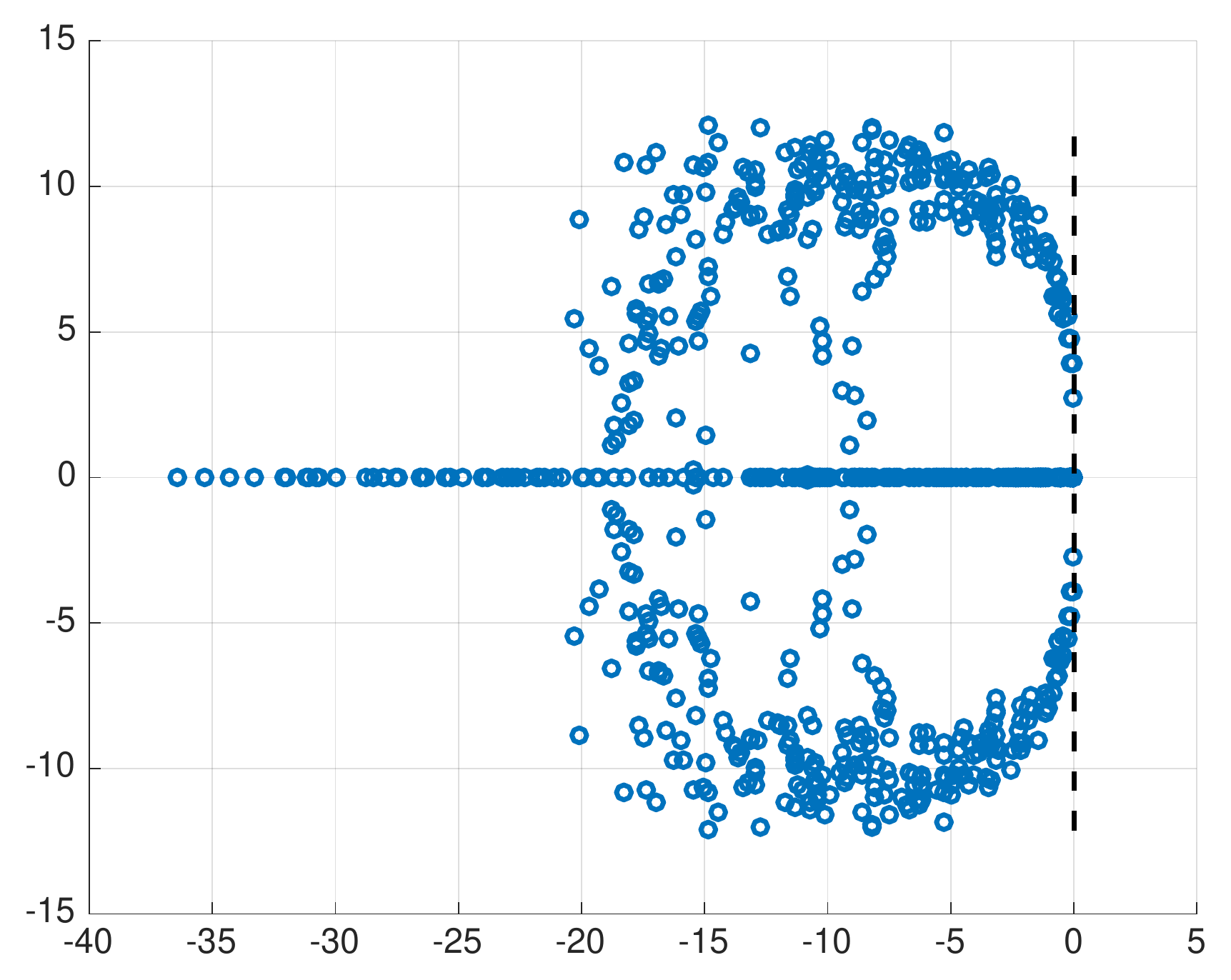}}
\hspace{.5em}
\subfloat[Upwind flux (zoom)]{\includegraphics[width=.31\textwidth]{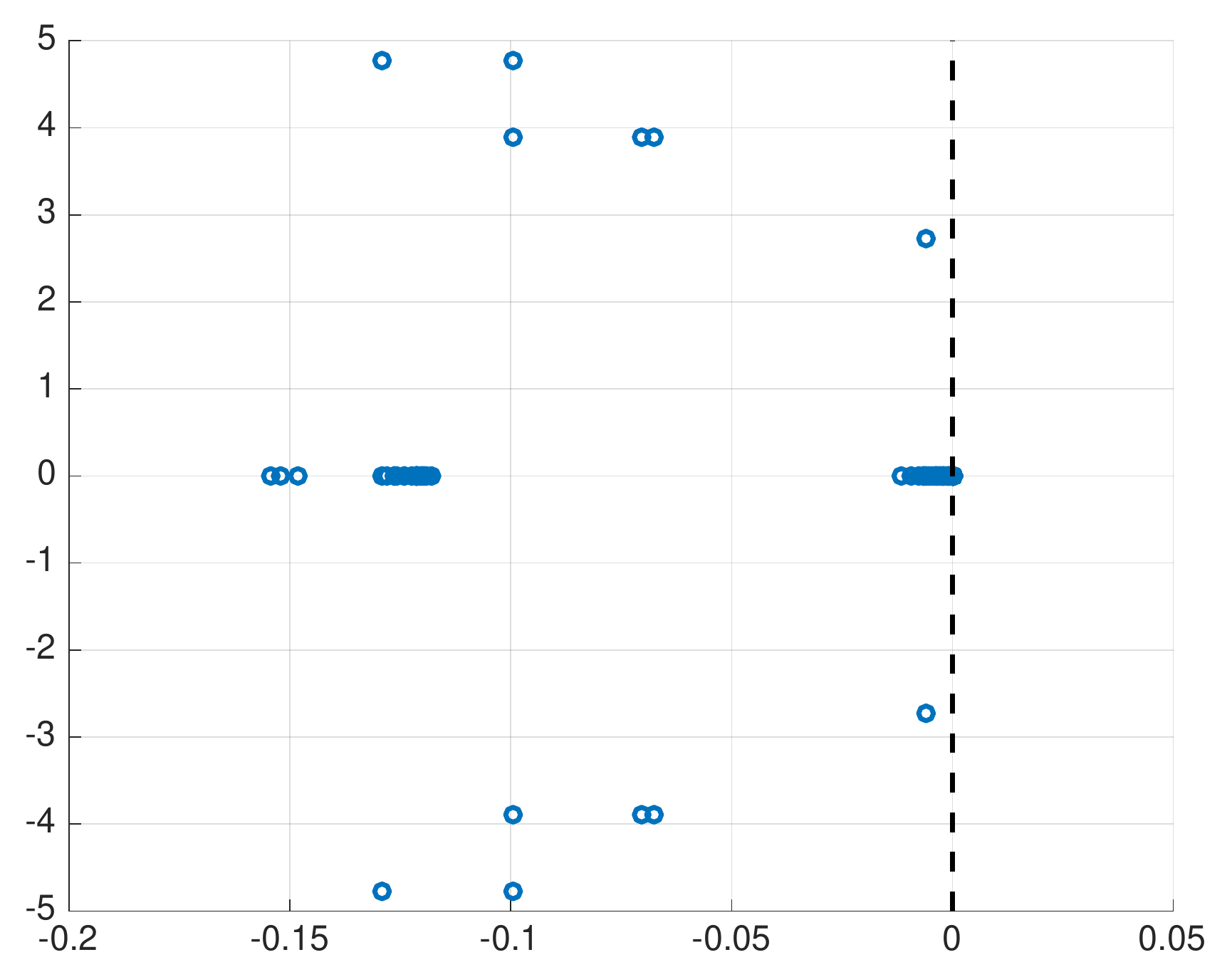}}
\hspace{.5em}
\subfloat[Central flux]{\includegraphics[width=.31\textwidth]{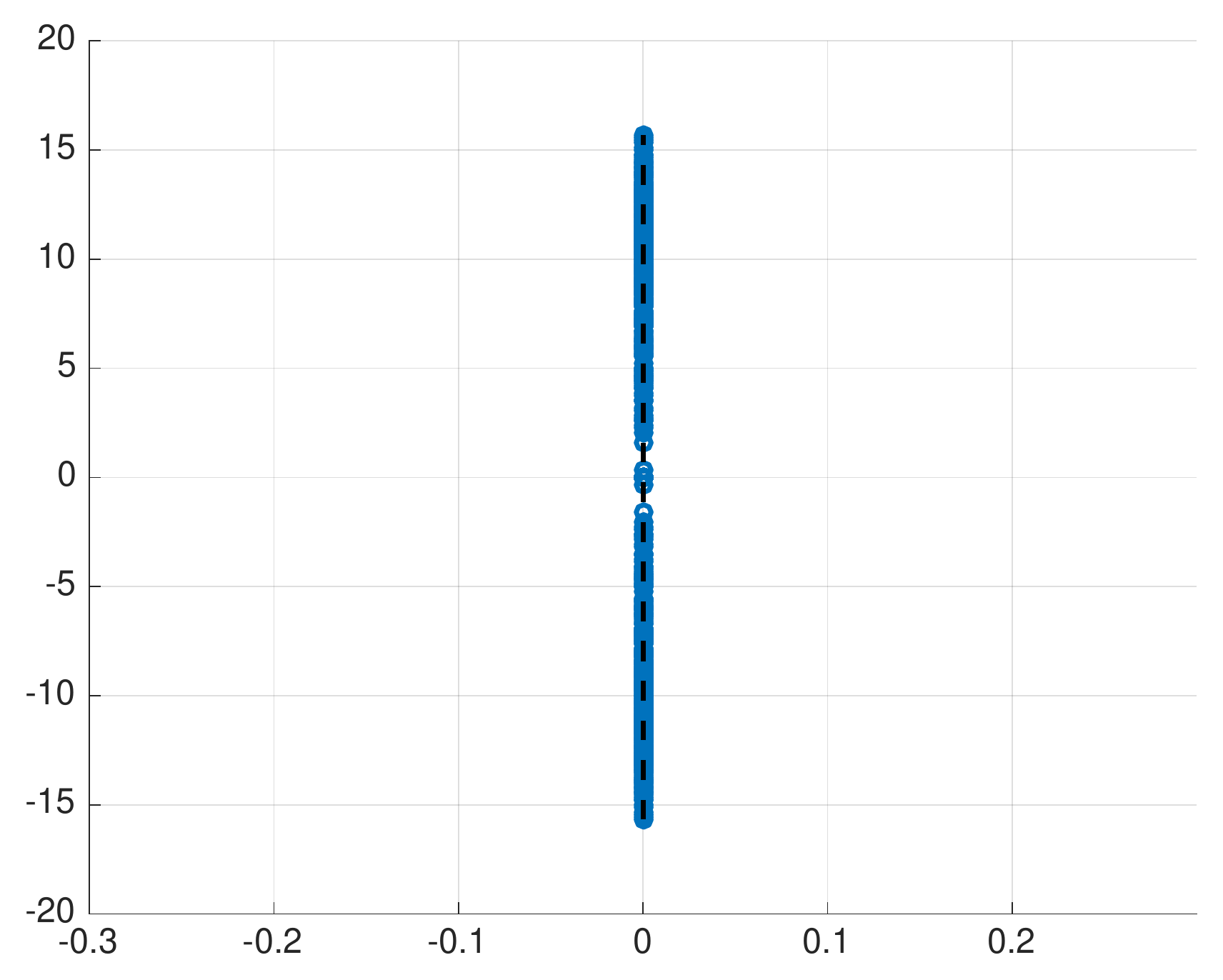}}
\caption{Eigenvalues for the exact quadrature \edit{DG} discretization \edit{on wedges} with $N=2$.  All eigenvalues have non-positive real part \edit{(up to machine precision)}, implying that the formulation is energy stable.}
\label{fig:eigsex}
\end{figure}

\subsection{Mass, differentiation, and lift matrices}
\label{sec:mats1}

Since mass lumping on wedges produces DG methods which are not energy stable, we seek to reduce the storage costs for exact quadrature DG methods on wedge meshes, taking advantage of the fact that for vertically mapped wedges, $J^k$ is constant in $t$.  Since wedge nodal basis functions can be defined as the tensor product of triangular and one-dimensional nodal basis functions, the mass matrix over an element $D^k$ has entries
\begin{align*}
\edit{\LRp{\bm{M}^k}_{ij,i'j'} = \int_{\widehat{D}} \ell_{ij}\ell_{i'j'} J = \int_{-1}^1 \ell^{\rm 1D}_j(t) \ell^{\rm 1D}_{j'}(t) \int_{\widehat{T}} \ell^{\rm tri}_{i}\ell^{\rm tri}_{i'} J(r,s)}.
\end{align*}
This implies that $\bm{M}^k$ is expressible as
\[
\bm{M}^k = \edit{\bm{M}^{{\rm tri},k}} \otimes \edit{\widehat{\bm{M}}^{\rm 1D}}
\]
where the entries of the triangle mass matrix $\edit{\bm{M}^{{\rm tri},k}}$ for a given element are
\[
\LRp{\edit{\bm{M}^{{\rm tri},k}}}_{ij} = \int_{\widehat{T}} \ell^{\rm tri}_{i}(r,s)\ell^{\rm tri}_{j}(r,s) J(r,s).
\]
\edit{We include superscripts $k$ in $\bm{M}^{{\rm tri},k}$ to emphasize that the triangle mass matrices are distinct from element to element.}

Face mass matrices under vertically mapped wedges also possess a Kronecker product structure.  The entries of face mass matrices for a face $f$ of an element $D^k$ are given by 
\[
\edit{\LRp{\bm{M}^k_f}_{ij,i'j'} = \int_{f} \ell_{ij}\ell_{i'j'} J^k_f,}
\] 
where $J^k_f$ is the Jacobian of the mapping from a reference triangle or quadrilateral to the physical triangular/quadrilateral face of an element.  For triangular faces, $J^k_f$ is constant, since any two planar triangles are affinely related.  
Since wedge nodes include nodes on each of the triangular faces, entries of the triangular face mass matrix \edit{$\LRp{\bm{M}_f^k}_{ij,i'j'}$ are nonzero only for nodes where $j,j' = 0$ or $j,j' = N$.}  
\[
\bm{M}_f^k = J^k_f\widehat{\bm{M}}^{\rm tri} \otimes \bm{e}
\]
where $\widehat{\bm{M}}^{\rm tri}$ is the reference triangle mass matrix, and $\bm{e} \in \mbb{R}^{N+1}$ is the first canonical basis vector for the triangle face corresponding to $t=-1$ and the last canonical basis vector for the triangular face corresponding to $t=1$, respectively.  

For general quadrilateral faces, $J^k_f$ is not constant.  However, for vertically mapped wedges, quadrilateral vertices are displaced only in the vertical direction, and quadrilateral faces are planar with constant normals.  Using similar arguments as in Lemma~\ref{lemma:wedge}, it is possible to show that $J^k_f$ is constant in the extruded (vertical) $t$ coordinate, and varies linearly in the horizontal coordinate.  
The mass matrix for a quadrilateral face also possesses a Kronecker product structure
\[
\bm{M}_f^k = \edit{\tilde{\bm{M}}^{{\rm tri},k}_{\rm edge}} \otimes \edit{\widehat{\bm{M}}^{\rm 1D}}
\]
where $\edit{\tilde{\bm{M}}^{{\rm tri},k}_{\rm edge}}$ is the face (edge) mass matrix for a triangle, which is computed using the restriction of the quadrilateral face Jacobian $J^k_f$ to the edge
\[
\LRp{\edit{\tilde{\bm{M}}^{{\rm tri},k}_{\rm edge}} }_{ij} = \int_{-1}^1 \ell^{\rm tri}_i \ell^{\rm tri}_j J^k_f.
\]
Since $J^k_f$ varies along the edge,  $\edit{\tilde{\bm{M}}^{{\rm tri},k}_{\rm edge}}$ is distinct from face to face.  

The tensor-product nature of the face mass matrices simplifies the form of the lift matrix.  For a triangular face $f$, the lift matrix $\bm{L}^k_f$ reduces to 
\[
\bm{L}^k_f = \LRp{\bm{M}^k}^{-1} \bm{M}^k_f = J^k_f\LRp{\edit{\bm{M}^{{\rm tri},k}}}^{-1}\widehat{\bm{M}}^{\rm tri} \otimes \LRp{\edit{\widehat{\bm{M}}^{\rm 1D}}}^{-1} \bm{e},
\]
while the lift matrix for a quadrilateral face $f$ is
\[
\bm{L}^k_f = \LRp{\edit{\bm{M}^{{\rm tri},k}}}^{-1}\edit{\tilde{\bm{M}}^{{\rm tri},k}_{\rm edge}} \otimes \bm{I}.
\]
This implies that, for vertically mapped wedges, lift matrices for quadrilateral faces are always block diagonal.  

We refer to the matrix $\LRp{\edit{\bm{M}^{{\rm tri},k}}}^{-1}\widehat{\bm{M}}^{\rm tri}$ as the triangular lift $\edit{\bm{L}^{{\rm tri},k}}$.  Both lift matrices for triangular and quadrilateral faces are tensor products of the triangular lift matrix and a one-dimensional matrix.  The triangular lift matrix also appears again in the expression for differentiation matrices.

Since nodal basis functions for the wedge are constructed in a tensor-product fashion, reference differentiation matrices possess a Kronecker product structure.  We define $\bm{D}_r^{\rm tri}, \bm{D}_s^{\rm tri}$ to be differentiation matrices for nodal basis functions defined on the triangle (using Warp and Blend nodes for the face of a tetrahedron).  Likewise, we define $\bm{D}_t^{\rm 1D}$ as the differentiation matrix for a Lagrange basis at degree $N$ Gauss-Legendre-Lobatto nodes in one dimension.  Then, the reference wedge differentiation matrices are 
\[
\bm{D}_r = \bm{D}_r^{\rm tri} \otimes \bm{I}^{\rm 1D}, \qquad \bm{D}_s = \bm{D}_s^{\rm tri} \otimes \bm{I}^{\rm 1D}, \qquad \bm{D}_t = \bm{I}^{\rm tri} \otimes \bm{D}_t^{\rm 1D},
\]
where $\bm{I}^{\rm tri}, \bm{I}^{\rm 1D}$ are square identity matrices of size $(N+1)(N+2)/2$ and $(N+1)$, respectively.  

Weak differentiation matrices also exhibit a Kronecker product structure for vertically mapped wedges.  Lemma~\ref{lemma:wedge} gives that $r_x, s_x, r_y, s_y$ are constant within an element, such that
\[
\edit{\LRp{\bm{S}^k_x}_{ij,i'j'} = \int_{\widehat{D}} \pd{\ell_{i'j'}}{x}\ell_{ij} J^k = 
r_x\int_{\widehat{D}} \pd{\ell_{i'j'}}{r} \ell_{ij} + s_x\int_{\widehat{D}}\pd{\ell_{i'j'}}{s}\phi_{ij} J^k  + \int_{\widehat{D}} \pd{\ell_{i'j'}}{t} \ell_{ij} t_x J^k}.
\]
Noting from Lemma~\ref{lemma:wedge} that $t_xJ^k$ is a polynomial of degree 1 in $t$ and that \edit{$\pd{\ell_{i'j'}}{t}$} is a polynomial of degree $(N-1)$, their product is a polynomial of degree $N$.  Since multiplication by the mass is equivalent to integration of a degree $N$ polynomial against elements of the basis $\ell_{ij}$, we have that
\[
\edit{\int_{\widehat{D}} t_x J^k \pd{\ell_{i'j'}}{t} \ell_{ij} = \LRp{ {\edit{\bm{M}^{{\rm tri},k}}} \otimes \edit{\widehat{\bm{M}}^{\rm 1D}} {\rm diag}(t_xJ^k) \bm{D}^{\rm 1D}_t}_{ij,i'j'},}
\]
where ${\rm diag}(t_xJ^k)$ is a diagonal matrix of $t_x J^k$ at Gauss-Legendre-Lobatto nodes in the $t$ coordinate.  The same argument can be made for $\bm{S}^k_y$, implying that 
\begin{align*}
\bm{D}_x^k &= \LRp{r_x \bm{D}_r^{\rm tri} + s_x \bm{D}_s^{\rm tri}} \otimes \bm{I}^{\rm 1D} + \edit{\bm{L}^{{\rm tri},k}} \otimes {\rm diag}(t_x J^k) \bm{D}^{\rm 1D}_t \\
\bm{D}_y^k &= \LRp{r_y \bm{D}_r^{\rm tri} + s_y \bm{D}_s^{\rm tri}} \otimes \bm{I}^{\rm 1D} + \edit{\bm{L}^{{\rm tri},k}} \otimes {\rm diag}(t_y J^k) \bm{D}^{\rm 1D}_t.
\end{align*}
The differentiation matrix $\bm{D}^k_z$ also possesses a tensor product structure; additionally, since $r_z, s_z = 0$ and $t_z$ is constant for vertically mapped wedges, the expression for $\bm{D}^k_z$ simplifies to
\[
\bm{D}^k_z = \LRp{\edit{\bm{M}^{{\rm tri},k}}}^{-1}\widehat{\bm{M}}^{\rm tri} \otimes t_zJ^k \bm{D}^{\rm 1D}_t = \edit{\bm{L}^{{\rm tri},k}} \otimes t_zJ^k \bm{D}^{\rm 1D}_t.  
\]
If the mapping is affine, the triangular lift matrix $\edit{\bm{L}^{{\rm tri},k}}$ reduces to $J^k \bm{I}^{\rm tri}$, weak differentiation matrices reduce to scalings of nodal differentiation matrices \cite{gandham2015high}.  

Finally, since the method in this paper uses exact quadrature, \edit{all the terms in the DG formulation (\ref{eq:form}) are computed exactly} and theoretical results for standard DG methods are automatically inherited, such as high order accuracy and energy stability \cite{brenner2007mathematical, hesthaven2007nodal}.

\subsection{Reduced storage costs}

For general vertex-mapped hexahedra, pyramids, and tetrahedra, there exist bases or stable mass-lumped approximations which yield low-storage mass matrices \cite{karniadakis2013spectral, chan2015orthogonal}.  However, unlike other element types, time domain DG methods on wedges (using polynomial bases) require explicit storage of matrices for each element.  Naive storage of elemental matrices can increase the available memory required from $O(N^3)$ to $O(N^6)$ for each element in three dimensions.  These large storage costs can greatly limit the admissible problem sizes on GPUs and modern accelerator architectures, which tend to possess limited on-device memory.  There does not appear to be a polynomial basis which yields (for non-affine wedges) lift matrices which requires $O(N^3)$ or less storage per element \cite{chan2015gpu}.   However, by assuming vertically mapped wedges, these storage costs may be reduced to a more acceptable level.  

Restricting to vertically mapped wedge elements exposes a Kronecker product structure in both differentiation and lift matrices involving the triangular lift matrix $\edit{\bm{L}^{{\rm tri},k}}  = \LRp{\edit{\bm{M}^{{\rm tri},k}}}^{-1}\widehat{\bm{M}}^{\rm tri}$.  Since $\LRp{\edit{\bm{M}^{{\rm tri},k}}}^{-1}$ varies depending on the local geometric mapping, we explicitly store $\edit{\bm{L}^{{\rm tri},k}}$  in order to avoid solving a matrix equation.  Storage of $\edit{\bm{L}^{{\rm tri},k}}$ requires $O(N^4)$ storage per element.  The lift matrix for quadrilateral faces of the wedge also requires the matrices $ \LRp{\edit{\bm{M}^{{\rm tri},k}}}^{-1}\edit{\tilde{\bm{M}}^{{\rm tri},k}_{\rm edge}}$, for which explicit storage requires only $O(N^3)$ storage per element.  Thus, while we have increased the asymptotic storage cost to $O(N^4)$, we only store one additional matrix of this size per element.  

\begin{figure}
\centering
\includegraphics[width=.5\textwidth]{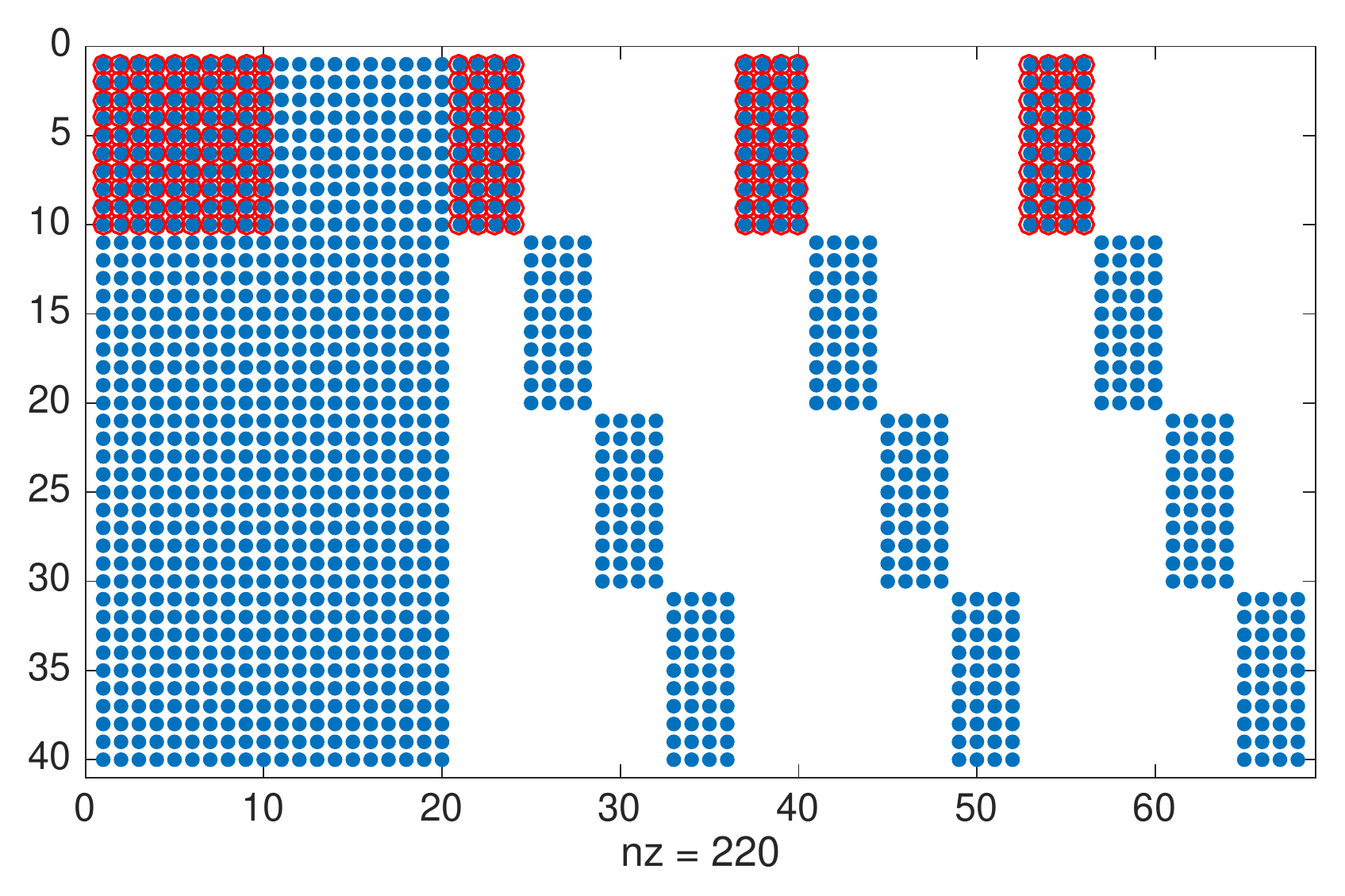}
\caption{\edit{Spy plot of a} lift matrix for a vertically mapped wedge.  Explicitly stored entries are circled in red.  \edit{The axes correspond to row and column indices.}}
\label{fig:lift}
\end{figure}

Figure~\ref{fig:lift} shows the storage required for the lift matrix as compared to the storage of the full lift matrix.  \edit{The matrix shown is a column-wise concatenation of lift matrices for both triangular and quadrilateral faces.  The first $(N+1)(N+2)/2$ columns correspond to the lift matrix for the bottom triangular face, while the next $(N+1)(N+2)/2$ columns correspond to the lift matrix for the top triangular face. The remaining three blocks of $(N+1)^2$ columns correspond to lift matrices for each of the three quadrilateral faces of the wedge.}
Interestingly, one can show that, for vertically mapped wedges, lift matrices under mass lumping have the same storage requirements.  

\section{Hybrid tet-wedge meshes for complex interfaces}
\label{sec:mesh}

We are interested in resolving geometries which contain complex interfaces with separate distinct layers, such as those found in geophysical applications (ocean-earth interfaces and layers of strata).   The approach taken in this work is to develop meshes consisting of wedge elements which conform to interfaces.  \edit{These approaches are also applicable to high order models of atmospheric or coastal flow, which often represent earth-air or ocean-air interfaces based on unstructured meshes of wedges \cite{wan2013icon,casulli2000unstructured}}.  Other techniques for representing complex interfaces include coordinate transformations, such as the sigma transform \cite{phillips1957coordinate}. Typical applications of the sigma coordinate method map a domain with smooth boundaries to a Cartesian domain, where the solution can be determined by solving a modified PDE using numerical methods for structured grids.  In contrast, directly approximating the domain using semi-structured prismatic meshes can handle more general geometries, including non-smooth interfaces.  Coupling to tetrahedral meshes is also made simpler through the use of vertically mapped wedge meshes rather than sigma transformations. 

Constructing multi-layered wedge meshes can be done in layer-by-layer fashion.  We assume that a single layer domain $\Omega_L$ is defined as
\[
\Omega_L = \LRc{ \LRp{x,y} \in [a,b]\times[c,d], \quad z_{\rm bottom}(x,y) \leq z \leq z_{\rm top}(x,y)},
\]
where $z_{\rm top}(x,y), z_{\rm bottom}(x,y)$ are surfaces defining the top and bottom boundaries of the layer.  A surface triangulation is then constructed which resolves both top and bottom layers.  The final step is to produce a quasi-uniform mesh which resolves the interior of the layer.  To do so, we construct a single-layer reference wedge mesh by extruding the surface triangulation and interpolate the surfaces $z_{\rm top}(x,y)$ and $z_{\rm top}(x,y)$ with the vertices at the top and bottom of the reference wedge mesh, respectively.  This produces a linear interpolant between the top and bottom surfaces, and evaluating this interpolant at a point $z$ and at the $(x,y)$ coordinates of the surface triangulation vertices produces the coordinates of an intermediate layer.  Evaluating this interpolant at equispaced points in the vertical coordinate of the reference wedge mesh produces multiple layers, which may then be connected to produce a quasi-uniform wedge mesh which conforms to $z_{\rm top}(x,y)$ and $z_{\rm bottom}(x,y)$.  An example of such a wedge mesh is shown in Figure~\ref{fig:wavy_top}.

\begin{figure}
\centering
\includegraphics[width=.55\textwidth]{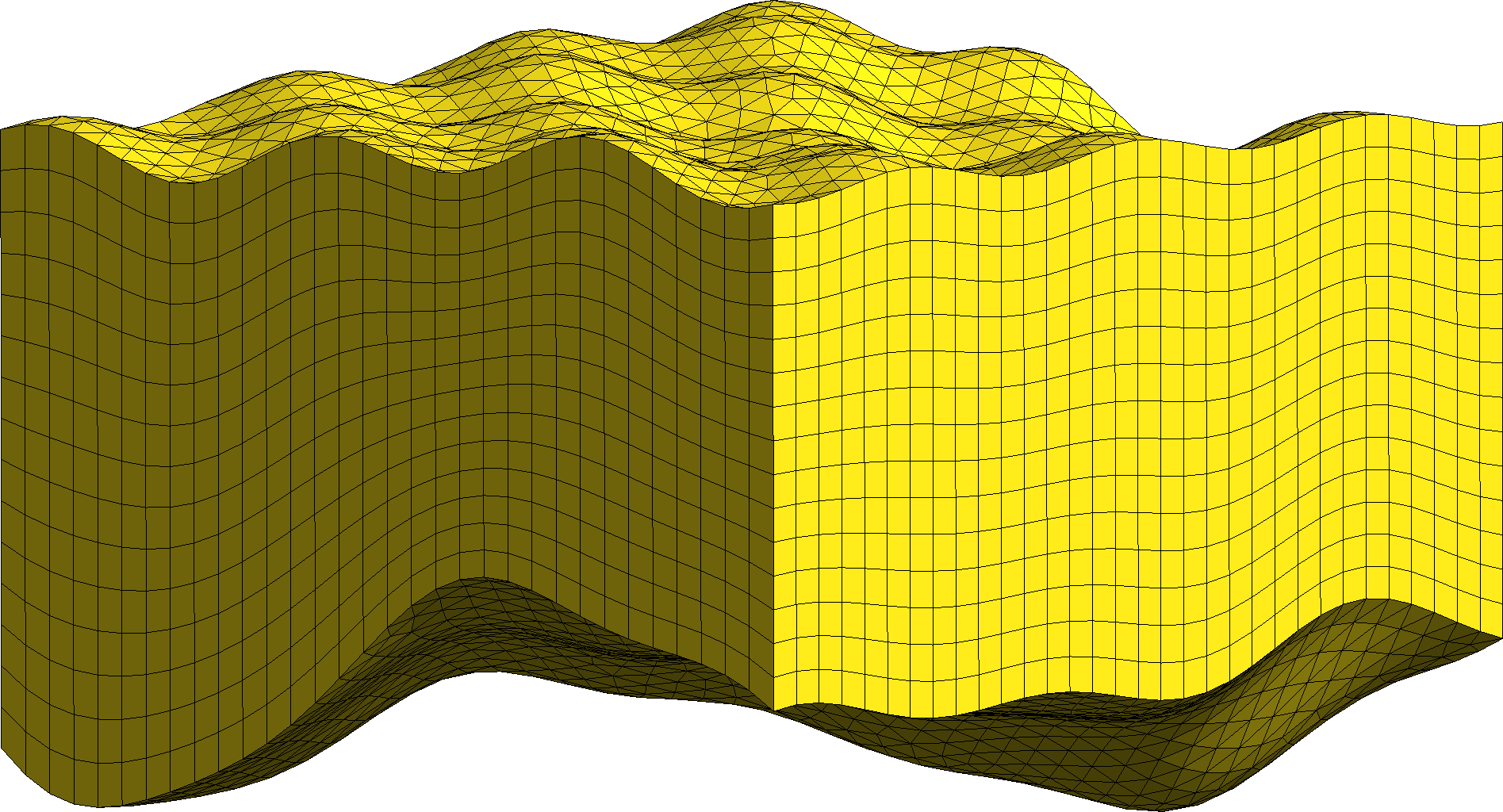}
\caption{Example of meshing a layer defined by two arbitrary surfaces using wedges.}
\label{fig:wavy_top}
\end{figure}

The construction of meshes for layered media can also be done using extrusions of quadrilaterals, resulting in more efficient hexahedral grids.  However, an advantage of wedge meshes is their ability to conform to surfaces of tetrahedral meshes.  This is useful in some geophysical applications, where meshes which conform to interfaces in high contrast media are desirable for capturing sharp transitions between areas of differing wavespeed.  In these cases, it is not typically possible to partition strata of the earth into distinct layers, requiring instead more flexible tetrahedral mesh generation algorithms to produce interface-conforming meshes.  However, wedge meshes may be still be used to capture earth-ocean interfaces or layered structures if present, while tetrahedral meshes are used to capture non-layered high contrast interfaces within the earth. Figure~\ref{fig:hybrid} shows an example of such a mesh, constructed using the above procedure for layered wedge meshes and the Tetgen mesh generation package \cite{si2015tetgen}.  

\begin{figure}
\centering
\subfloat{\includegraphics[width=.4\textwidth]{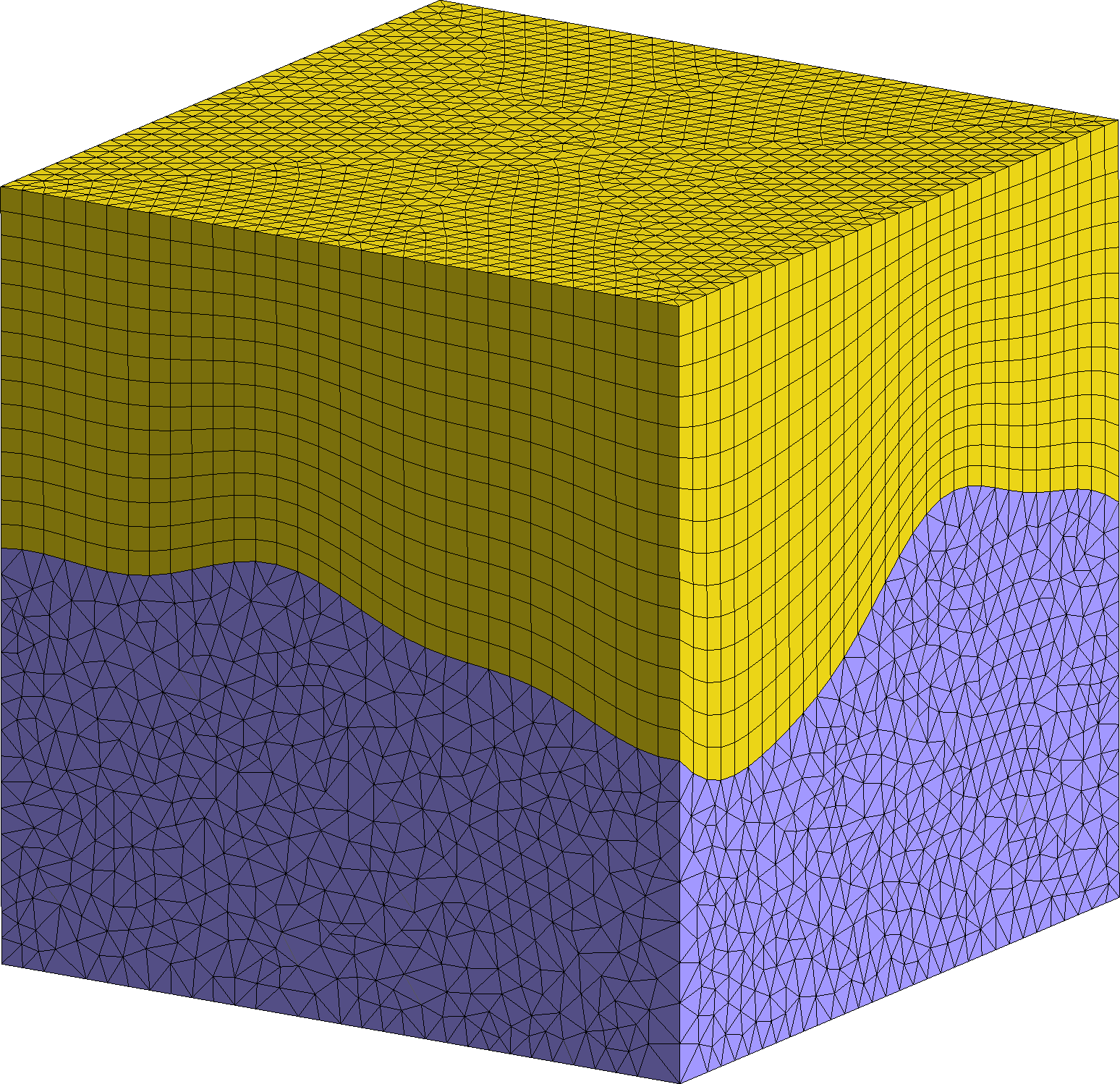}}
\hspace{2em}
\subfloat{\includegraphics[width=.4\textwidth]{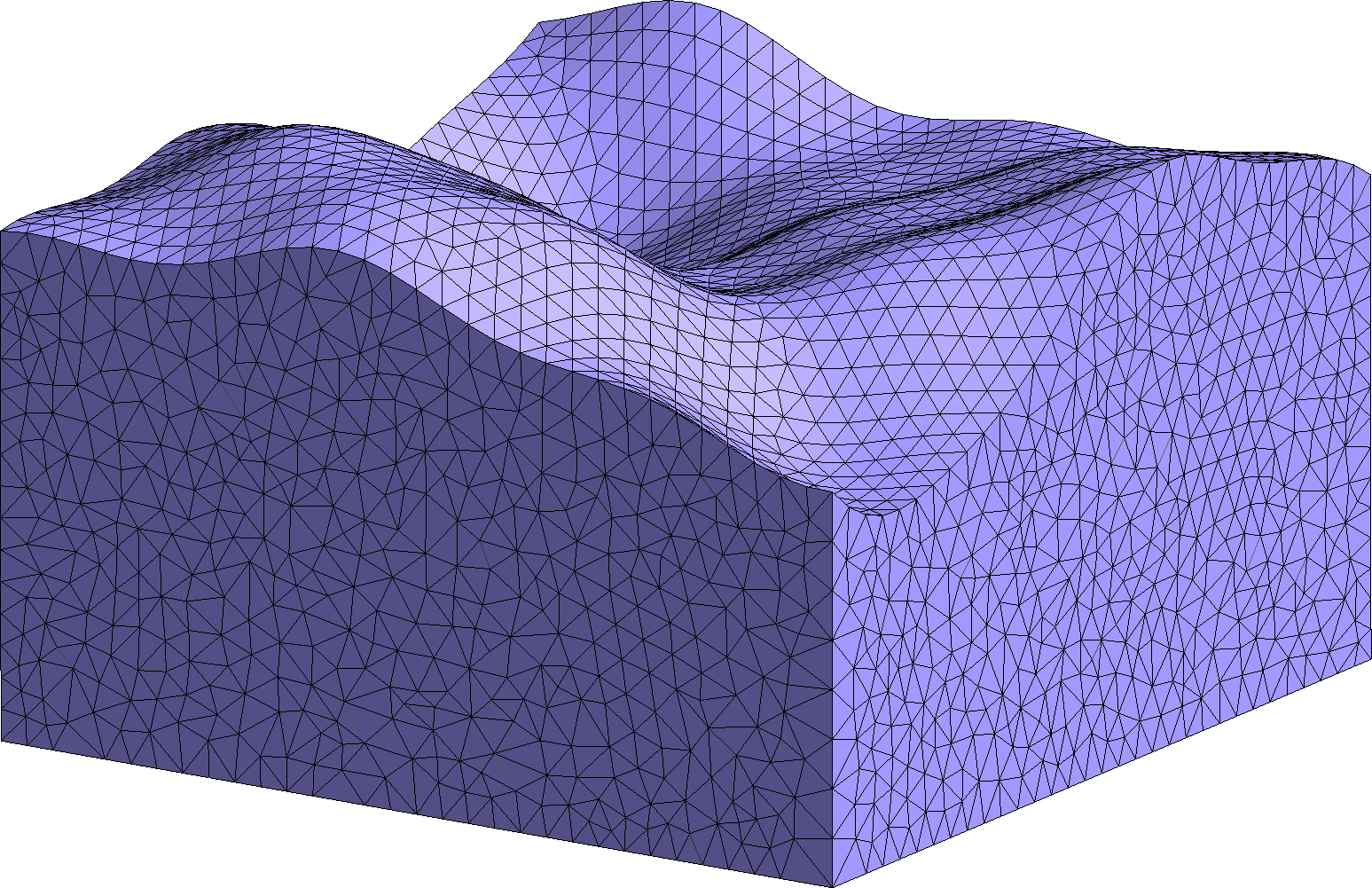}}
\caption{Hybrid wedge-tet meshes for capturing interfaces.}
\label{fig:hybrid}
\end{figure}

\section{Numerical experiments}
\label{sec:num}
In the following sections, we present numerical experiments which confirm the high order accuracy and efficiency of the presented methods on many-core architectures.  \edit{We also present experiments which highlight differences between between different approaches to high order DG methods on hybrid meshes.}  

A GPU-accelerated high order DG method on hybrid meshes was presented in \cite{chan2015gpu}; however the method did not inherit several key features of high order nodal DG methods on tetrahedral meshes.  First, the use of mass lumping and Gauss-Legendre-Lobatto underintegration was shown to decrease convergence rates by half an order in comparison to the use of full quadrature.  Secondly, due to the use of a rational Low-Storage Curvilinear DG (LSC-DG) basis \cite{warburton2010low,warburton2013low}, high order convergence rates were observed but were contingent on refinement strategies resulting in asymptotically affine elements.  The use of LSC-DG reduces storage costs on general vertex-mapped wedges, but results in a method which may not be robust to mesh perturbation, implying that high order rates of convergence could be lost for non-nested or unstructured mesh refinement.  Finally, the use of LSC-DG necessitated the explicit use of quadrature on faces for energy stability, and requires the evaluation of solutions at quadrature points on triangular and quadrilateral faces.  This procedure introduces additional computational work with $O(N^5)$ complexity compared to quadrature-free nodal DG methods.  

The method presented addresses each of these issues for vertically mapped wedges, resulting in a quadrature-free nodal DG method which is both high order accurate method and robust to vertical mesh perturbation.  Discrete timestep restrictions were determined using geometric factors and constants in trace and Markov inequalities \cite{chan2015gpu}.  Experiments were performed on an Nvidia GTX 980 GPU with a cross-platform implementation leveraging the OCCA framework \cite{medina2014occa}.  

\subsection{Convergence tests}
\label{sec:conv}

A significant advantage of high order approximations is that, compared to a low order approximation, far fewer degrees of freedom are required to approximate a sufficiently regular function to a given tolerance.  This is reflected in the convergence rates for high order methods: for a degree $N$ order polynomial approximation, the $L^2$ error converges with rate $O\LRp{h^{N+1}}$, where $h$ is the mesh size.  DG methods with upwind numerical fluxes converge at this optimal rate on a large class of meshes \cite{cockburn2008optimal}, though in general one can expect only a $O\LRp{h^{N+1/2}}$ rate of convergence \cite{johnson1986analysis}.  

It is also possible to diminish the convergence rate of DG methods through underintegration.  For example, mass-lumped DG-SEM methods are observed to converge in the $L^2$ norm at rate $O\LRp{h^{N+1/2}}$ \cite{wilcox2010high,chan2015gpu}.  This is observed to improve to $O\LRp{h^{N+1}}$ if exact quadrature rules are used, at the cost of introducing an extra interpolation step to compute solution values on faces.  

In contrast, the reduced storage DG method in this work avoids both underintegration and extra interpolation steps.  Figure~\ref{fig:rates} shows the convergence of the $L^2$ error for an exact pressure solution 
\[
p(x,y,z,t) = \cos\LRp{\frac{\pi x}{2}}\cos\LRp{\frac{\pi y}{2}}\cos\LRp{\frac{\pi z}{2}}\cos\LRp{\frac{\sqrt{3}\pi t}{2}}
\]
under mesh refinement, and optimal rates of convergence (Table~\ref{table:rates}) are observed.  

\begin{figure}
\centering
\subfloat{\includegraphics[width=.4\textwidth]{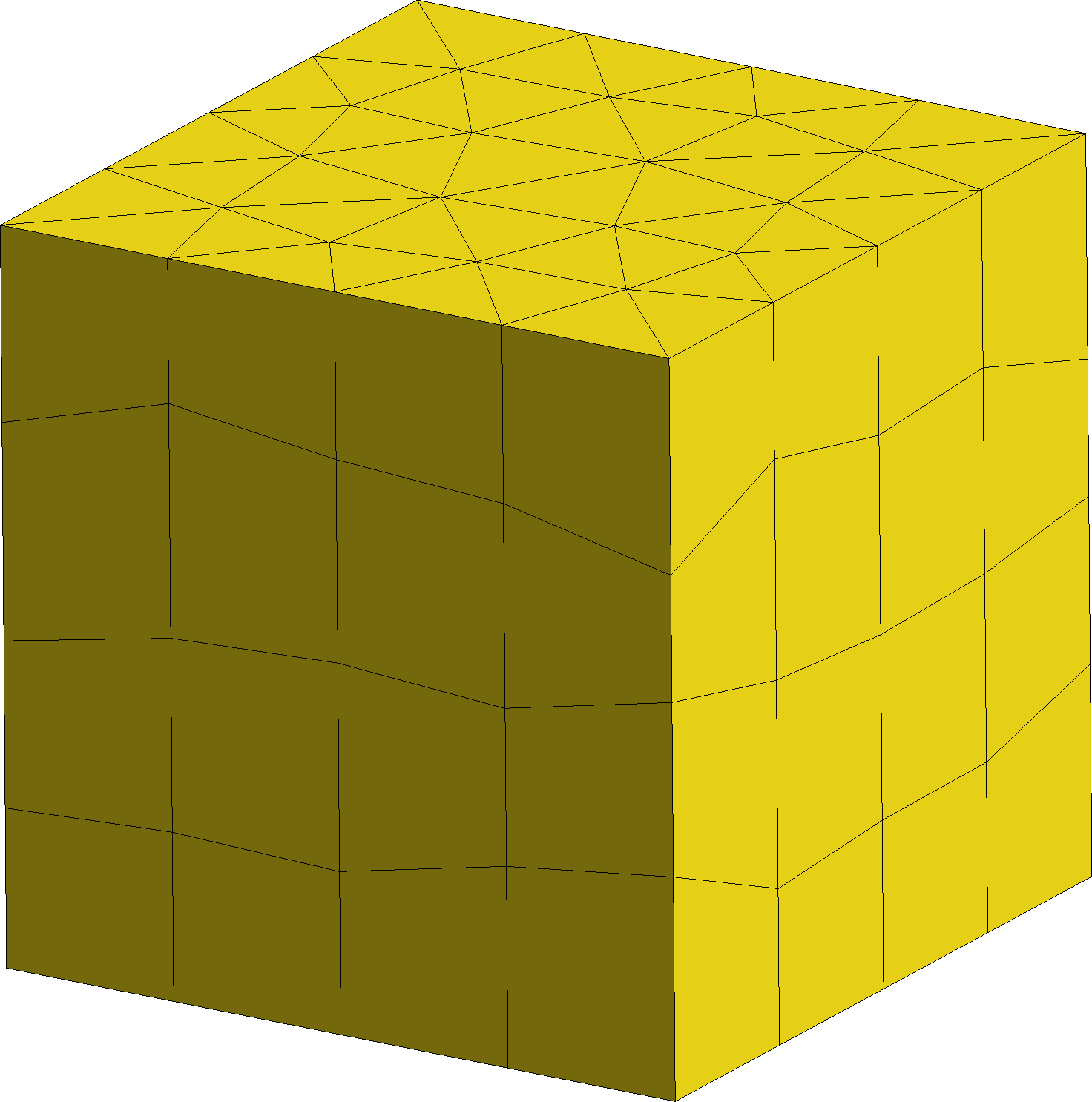}}
\hspace{2em}
\subfloat{\includegraphics[width=.4\textwidth]{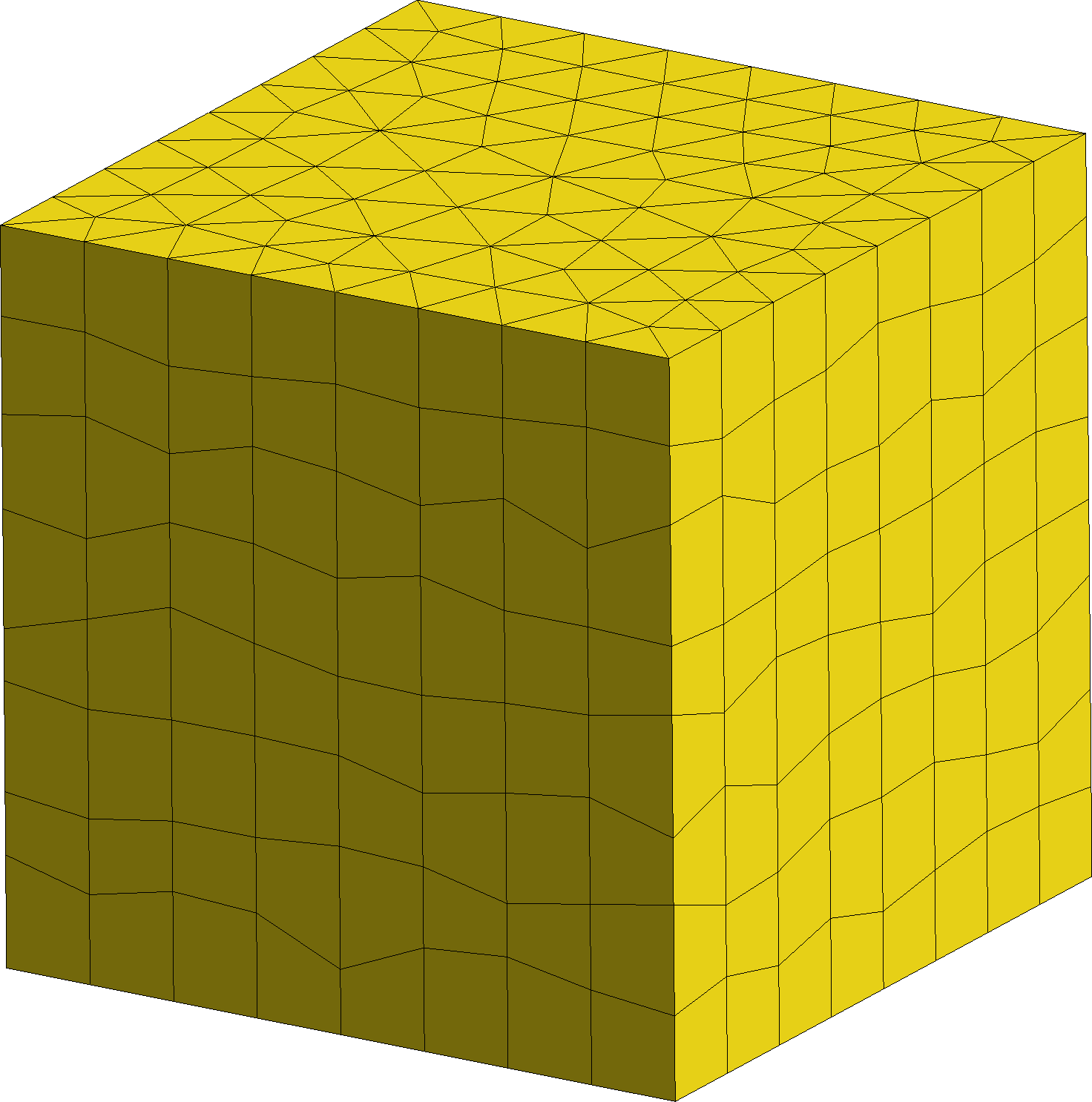}}
\caption{Perturbed vertically mapped wedge meshes used in unstructured mesh refinement tests.}
\label{fig:perturb}
\end{figure}

To verify the robustness of our method \edit{with respect to} unstructured mesh refinement, we examine convergence also on a sequence of vertically mapped perturbed wedge meshes.  Each wedge mesh is generated by extruding an unstructured surface triangulation vertically into several layers, then perturbing the vertical coordinates of each wedge randomly (as shown in Figure~\ref{fig:perturb}).  Optimal rates of convergence of $L^2$ error are observed (Figure~\ref{fig:rates} and Table~\ref{table:rates}) for both structured and unstructured mesh refinement.  

We also compare the $L^2$ error of our limited storage implementation of standard DG to that of LSC-DG on each set of meshes in Figure~\ref{fig:rates}.  On affine meshes, LSC-DG reduces to a standard DG method, and we observe optimal convergence for both methods.  The error for LSC-DG is slightly less than that for DG at lower orders, due to the fact that the initial condition is computed as an $L^2$ projection instead of through Lagrange interpolation (though by $N=3$, this difference is negligible).  For perturbed meshes, $L^2$ errors for LSC-DG begin to stall at small mesh sizes $h$, due to the fact that under perturbed mesh refinement, elements are not asymptotically affine.  

\begin{figure}
\centering
\subfloat{
\begin{tikzpicture}
\begin{loglogaxis}[
	legend cell align=left,
	width=.49\textwidth,
    title={Convergence (structured meshes)},
    xlabel={Mesh size $h$},
    ylabel={$L^2$ error},
    xmin=.01, xmax=100,
    ymin=5e-7, ymax=2,        
    legend pos=south east,
    xmajorgrids=true,
    ymajorgrids=true,
    grid style=dashed,
] 
\addplot+[color=blue,mark=*,mark options={fill=markercolor},semithick]
coordinates{(2,0.969171)(1,0.46998)(0.5,0.14959)(0.25,0.0386321)(0.125,0.00957494)};
\addplot+[color=red,mark=square*,mark options={fill=markercolor},semithick]
coordinates{(2,0.229495)(1,0.0485385)(0.5,0.00502668)(0.25,0.000568599)(0.125,6.91e-05)};
\addplot+[color=black,mark=triangle*,mark options={fill=markercolor},semithick]
coordinates{(2,0.1133)(1,0.00628775)(0.5,0.000397568)(0.25,2.54e-05)(0.125,1.7e-06)};

\addplot+[color=blue,dashed,mark=*,mark options={solid,fill=markercolor},semithick]
coordinates{(2,0.813)(1,0.197)(0.5,0.0489)(0.25,0.0115)(0.125,0.00277)};
\addplot+[color=red,dashed,mark=square*,mark options={solid,fill=markercolor},semithick]
coordinates{(2,0.165)(1,0.035)(0.5,0.00442)(0.25,0.000549)(0.125,6.85e-05)};
\addplot+[color=black,dashed,mark=triangle*,mark options={solid,fill=markercolor},semithick]
coordinates{(2,0.108)(1,0.0061)(0.5,0.000366)(0.25,2.3e-05)(0.125,1.84e-06)};
\legend{$N=1 \text{ (DG)}$,$N=2 \text{ (DG)}$,$N=3 \text{ (DG)}$,$N=1 \text{ (LSC)}$,$N=2 \text{ (LSC)}$,$N=3 \text{ (LSC)}$}
\end{loglogaxis}
\end{tikzpicture}
}
\subfloat{
\begin{tikzpicture}
\begin{loglogaxis}[
	legend cell align=left,
	width=.49\textwidth,
    title={Convergence (unstructured meshes)},
    xlabel={Mesh size $h$},
    ylabel={$L^2$ error},
    xmin=.01, xmax=100,
    ymin=5e-7, ymax=2,    
    legend pos=south east,
    xmajorgrids=true,
    ymajorgrids=true,
    grid style=dashed,
] 
\addplot+[color=blue,mark=*,mark options={fill=markercolor},semithick]
coordinates{(2,0.969)(1,0.567)(0.5,0.115)(0.25,0.0356)(0.125,0.00864)};
\addplot+[color=red,mark=square*,mark options={fill=markercolor},semithick]
coordinates{(2,0.229)(1,0.0329)(0.5,0.00274)(0.25,0.000407)(0.125,4.92e-05)};
\addplot+[color=black,mark=triangle*,mark options={fill=markercolor},semithick]
coordinates{(2,0.113)(1,0.00834)(0.5,0.000169)(0.25,1.53e-05)(0.125,1.55e-06)};

\addplot+[dashed,color=blue,mark=*,mark options={solid,fill=markercolor},semithick]
coordinates{(2,0.813)(1,0.235)(0.5,0.0342)(0.25,0.0103)(0.125,0.00334)};
\addplot+[dashed,color=red,mark=square*,mark options={solid,fill=markercolor},semithick]
coordinates{(2,0.165)(1,0.0274)(0.5,0.00252)(0.25,0.00046)(0.125,0.000102)};
\addplot+[dashed,color=black,mark=triangle*,mark options={solid,fill=markercolor},semithick]
coordinates{(2,0.108)(1,0.0077)(0.5,0.000162)(0.25,1.68e-05)(0.125,7.35e-06)};

\legend{$N=1 \text{ (DG)}$,$N=2 \text{ (DG)}$,$N=3 \text{ (DG)}$,$N=1 \text{ (LSC)}$,$N=2 \text{ (LSC)}$,$N=3 \text{ (LSC)}$}
\end{loglogaxis}
\end{tikzpicture}
}
\caption{Convergence of the $L^2$ error for LSC-DG and DG under structured and unstructured wedge mesh refinement.}
\label{fig:rates}
\end{figure}
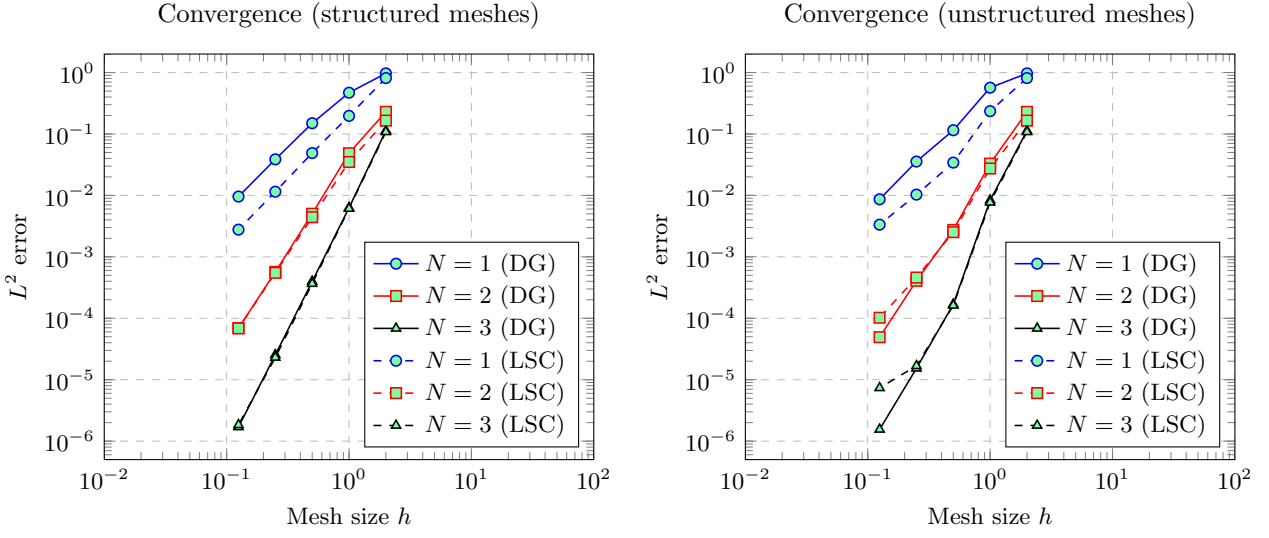

To emphasize that the convergence of LSC-DG can stagnate under specific types of mesh refinement, we consider a sequence of vertically mapped wedge meshes based on the quadrilateral meshes used in \cite{arnold2002approximation,warburton2013low}.  Each mesh in this sequence is constructed through a self-similar refinement pattern.  Figure~\ref{fig:arnold} shows an example of one such mesh, as well as the $L^2$ errors of DG and LSC-DG on such meshes.  While $L^2$ errors for DG are similar to those observed under structured mesh refinement, errors for LSC-DG stagnate under mesh refinement.  

\begin{figure}
\centering
\subfloat{
\begin{tikzpicture}
\begin{loglogaxis}[
	legend cell align=left,
	width=.49\textwidth,
    title={Convergence (structured meshes)},
    xlabel={Mesh size $h$},
    ylabel={$L^2$ error},
    xmin=.01, xmax=100,
    ymin=5e-7, ymax=2,        
    legend pos=south east,
    xmajorgrids=true,
    ymajorgrids=true,
    grid style=dashed,
] 
\addplot+[color=blue,mark=*,mark options={fill=markercolor},semithick]
coordinates{(2,0.969)(1,0.769)(0.5,0.226)(0.25,0.0602)(0.125,0.0149)};
\addplot+[color=red,mark=square*,mark options={fill=markercolor},semithick]
coordinates{(2,0.229)(1,0.0845)(0.5,0.0105)(0.25,0.00113)(0.125,0.000128)};
\addplot+[color=black,mark=triangle*,mark options={fill=markercolor},semithick]
coordinates{(2,0.113)(1,0.0149)(0.5,0.000894)(0.25,5.83e-05)(0.125,3.66e-06)};

\addplot+[dashed,color=blue,mark=*,mark options={solid,fill=markercolor},semithick]
coordinates{(2,0.805)(1,0.364)(0.5,0.112)(0.25,0.0522)(0.125,0.0437)};
\addplot+[dashed,color=red,mark=square*,mark options={solid,fill=markercolor},semithick]
coordinates{(2,0.165)(1,0.0711)(0.5,0.0145)(0.25,0.00517)(0.125,0.00469)};
\addplot+[dashed,color=black,mark=triangle*,mark options={solid,fill=markercolor},semithick]
coordinates{(2,0.0195)(1,0.00293)(0.5,0.00041)(0.25,0.000399)(0.125,0.000401)};

\legend{$N=1 \text{ (DG)}$,$N=2 \text{ (DG)}$,$N=3 \text{ (DG)}$,$N=1 \text{ (LSC)}$,$N=2 \text{ (LSC)}$,$N=3 \text{ (LSC)}$}
\end{loglogaxis}
\end{tikzpicture}
}
\hspace{1em}
\subfloat{\includegraphics[width=.45\textwidth]{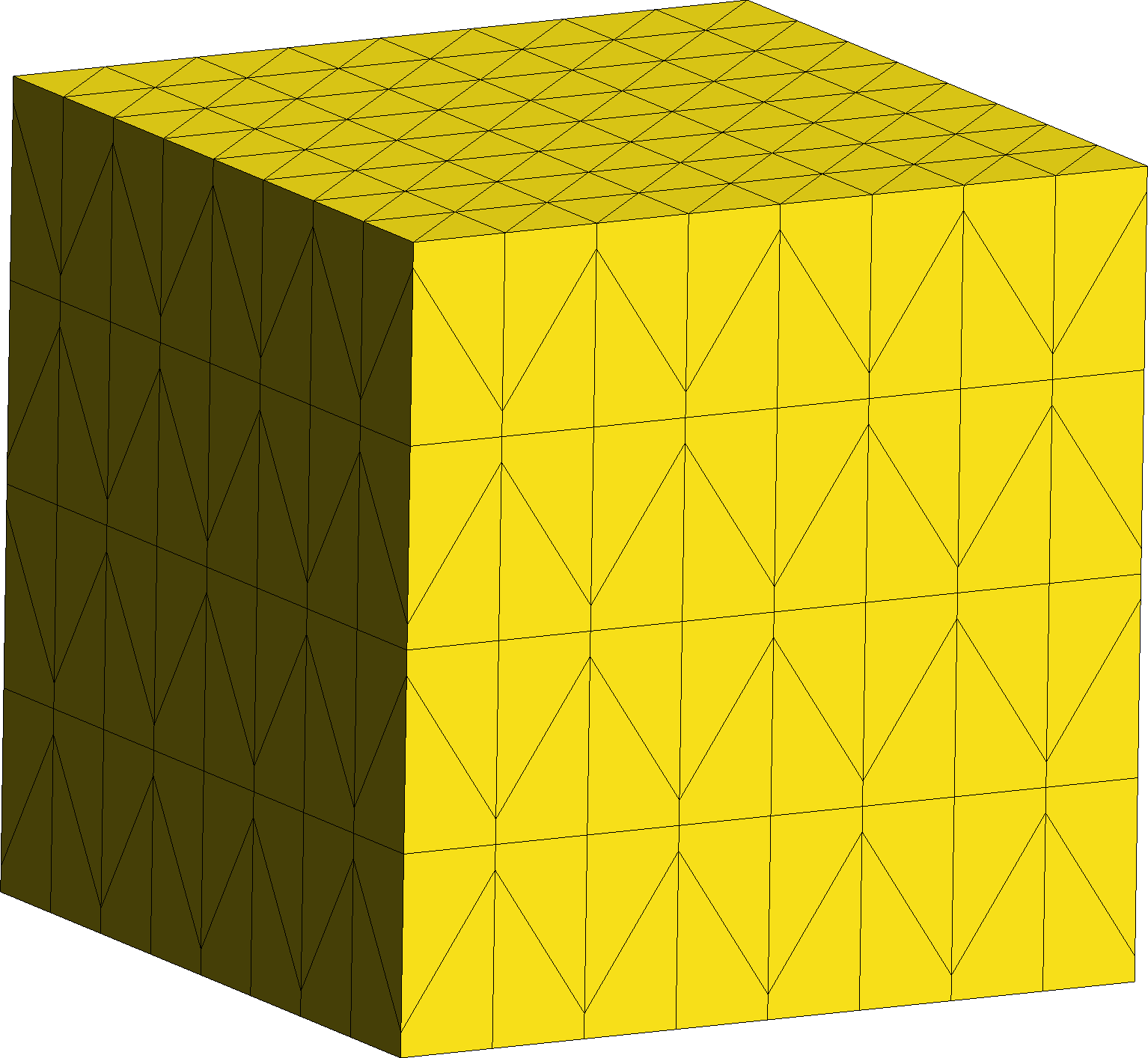}}
\caption{$L^2$ errors under mesh refinement for Arnold-type perturbed meshes.}
\label{fig:arnold}
\end{figure}

\begin{table}
\centering                                                                                   
\begin{tabular}{|c|c|c|c|}
\hline                                                   
 & $N=1$ & $N=2$ & $N=3$\\ 
\hline                                                   
Structured & 2.01 & 3.15 & 3.97\\ 
\hline                                                   
Unstructured & 1.72 & 2.9 & 4.42\\ 
\hline                                                   
Arnold-type & 1.9 & 3.13 & 3.99\\ 
\hline 
\end{tabular}
\caption{Observed rates of convergence of $L^2$ error for DG under various types of mesh refinement.}
\label{table:rates}
\end{table}

\subsection{Computational implementation}

Time-domain DG solvers using explicit time stepping typically advance the solution in time in two stages - a step which computes the evaluation of the right hand side, and one which computes the solution at the next timestep using a time integrator.  Following Kl\"ockner et al.\ \cite{klockner2009nodal}, we implement the above steps on a GPU by dividing this work into three distinct kernels for each type of element (wedge, tetrahedron):
\begin{itemize}
\item A volume kernel, which applies differentiation matrices to compute contributions to the right hand side resulting from volumetric integrals in the variational formulation.  
\item A surface kernel, which computes both numerical fluxes and applies the lift matrix to compute contributions to the right hand side resulting from surface integrals in the variational formulation.
\item An update kernel, which updates the solution in time.  
\end{itemize}
For the update kernel, we use a 3rd order multi-rate Adams Bashforth time integrator in this work \cite{godel2010gpu}, though any standard explicit time marching scheme may be used.  The implementation of wedge update kernels and tetrahedral volume, surface, and update kernels follows \cite{modave2015nodal}. Since the computational cost of the update kernel is minimal for quadrature-free nodal DG methods, we do not analyze its computational performance in this work.  Instead, we focus on the implementation of volume and surface kernels for the vertically mapped wedge elements.  

Finally, achieving good computational performance on GPUs typically requires tuning computational parameters to ensure work is distributed as evenly as possible among available GPU resources.  The work is divided such that a thread block processes computation for one or more elements.  We assign computational parameters $K_V, K_S$, and $K_U$, which we refer to respectively as the number of elements processed by the volume, surface, and update kernels in each thread block. These parameters can be tuned such that the number of total threads processed is made close to a multiple of 32, the number of threads which run concurrently within one thread block. For all following computational results, $K_V$, $K_S$, and $K_U$ have been optimized to minimize runtime.  

\subsection{GPU implementation}

In this section, we discuss parallelization strategies used in the implementation of volume and surface kernels on GPUs.  For the volume surface kernel, we parallelize over nodes in a triangular slice of the wedge, marching through slices of the wedge in serial.  This strategy takes advantage of the triangle-line tensor product structure of local matrices by decomposing computation into multiplication by one-dimensional and triangular matrices.  One-dimensional matrices are applied for each thread (corresponding to a node in a given triangular slice) by loading entries of the matrix from memory and computing the matrix product with data from the line of nodes in the extruded direction.  Since one-dimensional matrices are small, they are loaded into shared memory and reused for each triangular slice.  Triangular matrices are then applied by loading matrix entries from global memory once and reusing them for each slice of the wedge.   Algorithm~\ref{alg:vol} describes the implementation of the volume kernel in more detail.  

\begin{algorithm}
\caption{Wedge volume kernel} 
\label{alg:vol} 
\begin{algorithmic}[1]
\ParFor{each element $D^k$} 
\ParFor{triangular nodes $i = 1,\ldots, \frac{(N+1)(N+2)}{2}$} 
\For{one dimensional nodes $j = 1,\ldots, N+1$} 
\State Load geometric factors and solution variables from global memory into shared memory.
\State Load one-dimensional matrix $\bm{D}_t^{\rm 1D}$ to shared memory.
\EndFor
\EndParFor
\EndParFor

\State Memory fence (synchronize threads).  

\ParFor{each element $D^k$} 
\ParFor{triangular nodes $i = 1,\ldots, \frac{(N+1)(N+2)}{2}$} 
\State Apply one-dimensional matrix $\bm{D}_t^{\rm 1D}$ to the line of nodes extruded from node $i$.  
\State Store result to shared memory. 
\EndParFor
\EndParFor

\State Memory fence (synchronize threads).  

\ParFor{each element $D^k$} 
\ParFor{triangular nodes $i = 1,\ldots, \frac{(N+1)(N+2)}{2}$} 
\State Load triangular matrices $\edit{\bm{L}^{{\rm tri},k}}, \bm{D}_r^{\rm tri},\bm{D}_s^{\rm tri}$ from global memory.
\For{one dimensional nodes $j = 1,\ldots, N+1$} 
\State Apply triangular matrices to the $j$th triangular slice.  
\State Accumulate results into thread-local memory.
\EndFor
\For{one dimensional nodes $j = 1,\ldots, N+1$} 
\State Load results from thread-local memory.
\State Assemble right hand side contributions and write to global memory.
\EndFor

\EndParFor
\EndParFor
\end{algorithmic}
\end{algorithm}

For the surface kernels, each thread in a thread block is assigned to compute the right hand side contribution for a single node in the wedge.  Since the lift matrices are Kronecker products of $\edit{\bm{L}^{{\rm tri},k}}$ and one-dimensional matrices or vectors, the computation is again broken up into an application of $\edit{\bm{L}^{{\rm tri},k}}$ and computations with one-dimensional matrices.  First, numerical fluxes are computed and premultiplied by the triangular lift $\edit{\bm{L}^{{\rm tri},k}}$, which is loaded from global memory.  These results are saved to shared memory, and used to compute the second half of the Kronecker product $\edit{\bm{L}^{{\rm tri},k}}\otimes \LRp{\edit{\widehat{\bm{M}}^{\rm 1D}}}^{-1}\bm{e}$, which corresponds to scaling of the numerical fluxes (after premultiplication by the triangular lift matrix) by the $j$th entry of the vector $\LRp{\edit{\widehat{\bm{M}}^{\rm 1D}}}^{-1}\bm{e}$ where $j$ corresponds to the index of the current triangular slice of the wedge.  The lift matrices for quadrilateral faces of the wedge are then directly applied to (non-premultiplied) numerical fluxes to complete the surface kernel.  This procedure is outlined in Algorithm~\ref{alg:surf}.

Since the loop sizes over which the surface kernel is parallelized vary between the number of total face nodes, the number of triangular nodes, and the number of nodes in the wedge, we take the number of threads to be the maximum of all these values.  Consequentially, some threads remain idle at each iteration.  However, experiments with parallelization over a smaller number of threads to reduce the number of idle threads (for example, over the number of face nodes in a triangular or quadrilateral face) resulted in slower overall runtimes.  

\begin{algorithm}
\caption{Wedge surface kernel} 
\label{alg:surf}
\begin{algorithmic}[1]
\ParFor{each element $D^k$} 
\ParFor{nodes $i = 1,\ldots, \text{total number of face nodes}$} 
\State Load geometric factors from global memory into shared memory.
\State Load quadrilateral face lift matrices into shared memory.
\EndParFor
\EndParFor

\State Memory fence (synchronize threads).  

\ParFor{each element $D^k$} 
\ParFor{nodes $i = 1,\ldots, \text{total number of face nodes}$} 
\State Load solution traces and compute numerical fluxes.
\EndParFor
\EndParFor

\State Memory fence (synchronize threads).  

\ParFor{each element $D^k$} 
\ParFor{triangular nodes $i = 1,\ldots, \frac{(N+1)(N+2)}{2}$} 
\State Premultiply numerical fluxes on triangular faces by $J^k_f \edit{\bm{L}^{{\rm tri},k}}$.
\State Store result to shared memory.  
\EndParFor
\EndParFor

\State Memory fence (synchronize threads).  

\ParFor{each element $D^k$} 
\ParFor{wedge nodes $i = 1,\ldots, N_p$} 
\State Let $j$ to be the index of the triangular slice node $i$ lies in.  
\State Load premultiplied numerical fluxes on triangular faces, scale by $j$th entry of vector $\LRp{\edit{\widehat{\bm{M}}^{\rm 1D}}}^{-1} \bm{e}$.   
\State Apply lift matrices to numerical fluxes on quadrilateral faces.  
\State Assemble right hand side contributions and write to global memory.
\EndParFor
\EndParFor
\end{algorithmic}
\end{algorithm}

\subsection{Computational evaluation and performance}


In this section, we present results and timings which quantify the efficiency of a GPU-accelerated implementation of a nodal discontinuous Galerkin solver for vertically mapped wedge meshes.  All computational results are reported for single precision.  \edit{For the simulation of wave propagation in seismic applications, single precision is sufficient for both accuracy and numerical stability.  For more complex nonlinear physics, some settings with highly disparate length scales benefit more from double precision.  The use of double precision tends to result in slightly less than half the performance of GPU kernels in single precision, though the precise performance difference depends on the architecture of the specific GPU. }

The computational complexity of both the volume and surface kernel for a given wedge element is $O(N^4)$.  For the volume kernel, the cost is dominated by the application of the differentiation matrices.  Since each differentiation matrix is a Kronecker product of a $\frac{(N+1)(N+2)}{2} \times \frac{(N+1)(N+2)}{2}$ triangular matrix and a one-dimensional $(N+1)\times (N+1)$ matrix, the cost is dominated by the $O(N^4)$ cost of applying triangular matrices.  Similarly, for the surface kernel, the cost is dominated by the application of the lift matrices for each triangular face.  These matrices are again representable as Kronecker products of triangular and one-dimensional matrices, implying that the asymptotic cost of the surface kernel is also $O(N^4)$.  

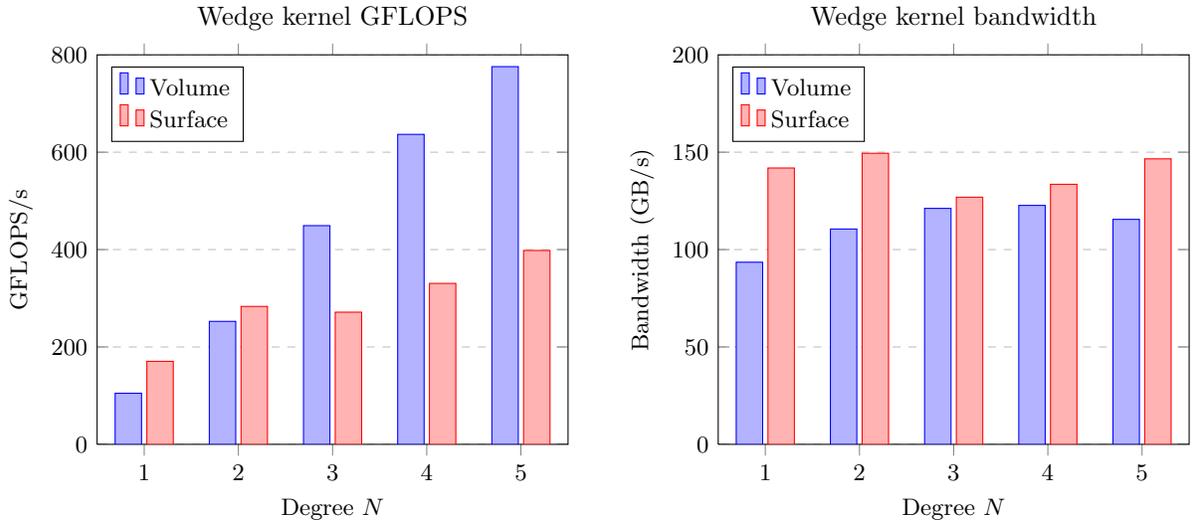
\begin{figure}
\centering
\subfloat{
\begin{tikzpicture}
\begin{axis}[
	width=.475\textwidth,
	legend cell align=left,
	title={Wedge kernel GFLOPS },
	xlabel={Degree $N$},
	ylabel={GFLOPS/s},
	xmin=.5, xmax=5.5,
	ymin=0,ymax=800,
        ybar=5*\pgflinewidth,
        bar width=10pt,	
	xtick={1,2,3,4,5},
	legend pos=north west,
	ymajorgrids=true,
	grid style=dashed,
] 
\addplot coordinates{(1,104.907)(2,252.584)(3,449.157)(4,636.643)(5,775.717)};
\addplot coordinates{(1,170.494)(2,283.227)(3,271.461)(4,330.498)(5,398.135)};

\legend{Volume, Surface}
\end{axis}
\end{tikzpicture}
}
\hspace{1em}
\subfloat{
\begin{tikzpicture}
\begin{axis}[
	width=.475\textwidth,
	legend cell align=left,
	title={Wedge kernel bandwidth},
	xlabel={Degree $N$},
	ylabel={Bandwidth (GB/s)},
	xmin=.5, xmax=5.5,
	ymin=0,ymax=200,	
        ybar=5*\pgflinewidth,
        bar width=10pt,
	xtick={1,2,3,4,5},
	ymin=0,
	legend pos=north west,
	ymajorgrids=true,
	grid style=dashed,
] 
\addplot coordinates{(1,93.507)(2,110.555)(3,121.154)(4,122.683)(5,115.509)};
\addplot coordinates{(1,141.882)(2,149.438)(3,126.89)(4,133.512)(5,146.584)};

\legend{Volume, Surface}
\end{axis}
\end{tikzpicture}
}
\caption{Computational performance of wedge kernels.}
\end{figure}

For both the volume and surface kernels, computational performance comparable to standard volume and surface kernels for nodal discontinuous Galerkin methods on tetrahedra is observed \cite{klockner2009nodal, chan2015bbdg}, though the achieved GFLOPS are slightly lower.  In contrast, the observed bandwidth of the wedge volume and surface kernels is higher than that observed for nodal DG methods on tetrahedra by a factor of two for the volume kernel and a factor of slightly less than three for the surface kernel at $N=5$.  

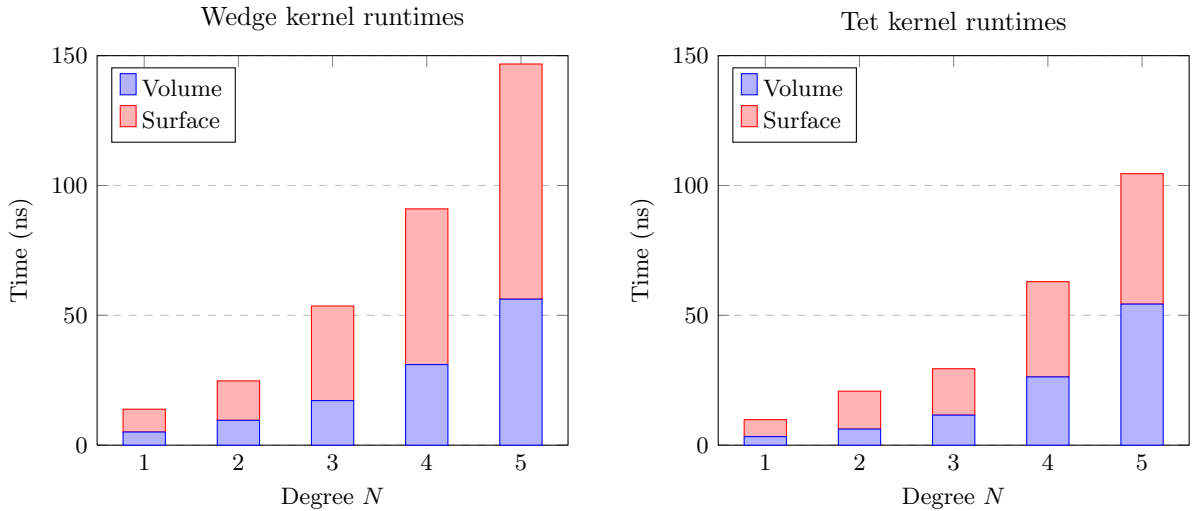
\begin{figure}
\centering
\subfloat{
\begin{tikzpicture}
\begin{axis}[
	width=.475\textwidth,
	legend cell align=left,
	title={Wedge kernel runtimes},
	xlabel={Degree $N$},
	ylabel={Time (ns)},
	xmin=.5, xmax=5.5,
	ymin= 0,ymax=150,
	ybar stacked,
        bar width=16pt,	
	xtick={1,2,3,4,5},
	legend pos=north west,
	ymajorgrids=true,
	grid style=dashed,
] 
\addplot coordinates{(1,5.06162)(2,9.57306)(3,17.1655)(4,31.0299)(5,56.228)};
\addplot coordinates{(1,8.80282)(2,15.1849)(3,36.4217)(4,59.9692)(5,90.559)};
\legend{Volume, Surface}
\end{axis}
\end{tikzpicture}
}
\hspace{1em}
\subfloat{
\begin{tikzpicture}
\begin{axis}[
	width=.475\textwidth,
	legend cell align=left,
	title={Tet kernel runtimes},
	xlabel={Degree $N$},
	ylabel={Time (ns)},
	xmin=.5, xmax=5.5,
	ymin=0,ymax=150,	
	ybar stacked,	
        bar width=16pt,
	xtick={1,2,3,4,5},
	ymin=0,
	legend pos=north west,
	ymajorgrids=true,
	grid style=dashed,
] 
\addplot coordinates{(1,3.27103)(2,6.19159)(3,11.5654)(4,26.285)(5,54.3224)};
\addplot coordinates{(1,6.54206)(2,14.6028)(3,17.8738)(4,36.6822)(5,50.2336)};

\legend{Volume, Surface}
\end{axis}
\end{tikzpicture}
}
\caption{Per-element runtimes for wedge and tet kernels.} 
\label{fig:runtime}
\end{figure}

Finally, comparing wall-clock runtimes of both wedge and tetrahedral kernels in Figure~\ref{fig:runtime}, we find that the per-element runtime of the wedge kernels is always greater than or equal to that of the tetrahedral kernels.  However, this can be misleading, as the number of degrees of freedom for a high order wedge is greater than that of a high order tetrahedra.  This is also reflected in the fact that a given volume can typically be triangulated with fewer wedges than tetrahedra; for example, a cube may be split into two well-conditioned prisms or five well-conditioned tetrahedra.  

Normalizing by the number of degrees of freedom in Figure~\ref{fig:rundof} shows that the per-dof runtime of the wedge volume kernel is lower than the per-dof runtime of the tetrahedral volume kernel for $N>1$.  For the surface kernel, the per-dof runtime is less for all orders except $N = 3$, for which tetrahedral kernels have been observed to perform better due to the number of degrees of freedom being close to a multiple of the number of concurrently executable threads in a workgroup \cite{klockner2009nodal}.  

We observe that the wedge volume kernel becomes more and more efficient relative to the tetrahedral volume kernel as $N$ increases; this is due to the fact that the asymptotic complexity volume kernel is $O(N^4)$ for the wedge, as opposed to $O(N^6)$ for tetrahedra.  We do not expect the same speedup as $N$ increases for the surface kernel, since both the wedge and tetrahedral surface kernels have the same asymptotic cost of $O(N^4)$.  


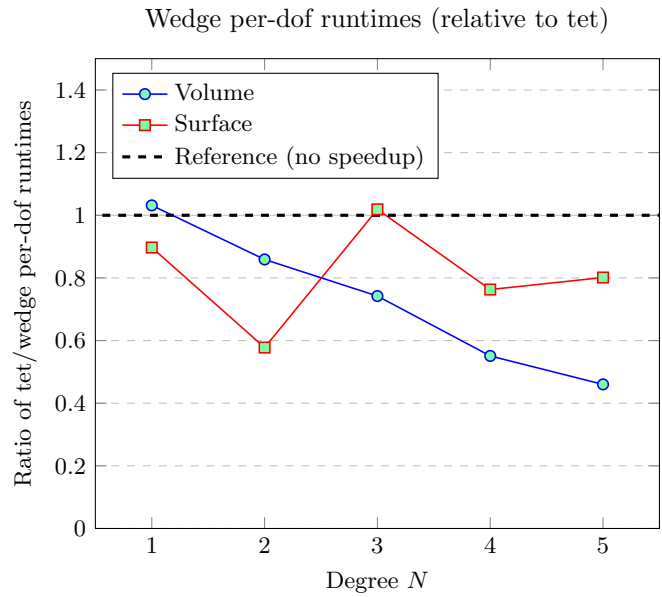
\begin{figure}
\centering
\begin{tikzpicture}
\begin{axis}[
	width=.55\textwidth,
	legend cell align=left,
	title={Wedge per-dof runtimes (relative to tet)},
	xlabel={Degree $N$},
	ylabel={Ratio of tet/wedge per-dof runtimes},
	xmin=.5, xmax=5.5,
	ymin=0,ymax=1.5,
	xtick={1,2,3,4,5},
	legend pos=north west,
	ymajorgrids=true,
	grid style=dashed,
] 
\addplot+[color=blue,mark=*,mark options={fill=markercolor},semithick]
coordinates{(1,1.03161)(2,0.858967)(3,0.742104)(4,0.550908)(5,0.460035)};
\addplot+[color=red,mark=square*,mark options={fill=markercolor},semithick]
coordinates{(1,0.897049)(2,0.5777)(3,1.01885)(4,0.76292)(5,0.801225)};
\addplot+[draw=black,mark=none,line legend, very thick,smooth,dashed] coordinates{(0,1)(6,1)};
\legend{Volume, Surface, Reference (no speedup)}
\end{axis}
\end{tikzpicture}
\caption{Ratio of per-dof runtimes of wedge kernels relative to per-dof runtimes of tetrahedral kernels (lower is better).}
\label{fig:rundof}
\end{figure}

We conclude by presenting a computational simulation of wave propagation through the bi-unit cube $[-1,1]^3$ with a wavespeed which is discontinuous across a complex interface.  The domain is triangulated using a hybrid mesh consisting of vertically mapped wedges and unstructured tetrahedra, and the initial condition is set to be a Gaussian pulse located at the origin $(0,0,0)$.  Figure~\ref{fig:wavy} shows the pressure at time $t = .75$ computed using a DG method with order $N=4$.  

\begin{figure}
\centering
\includegraphics[width=.8\textwidth]{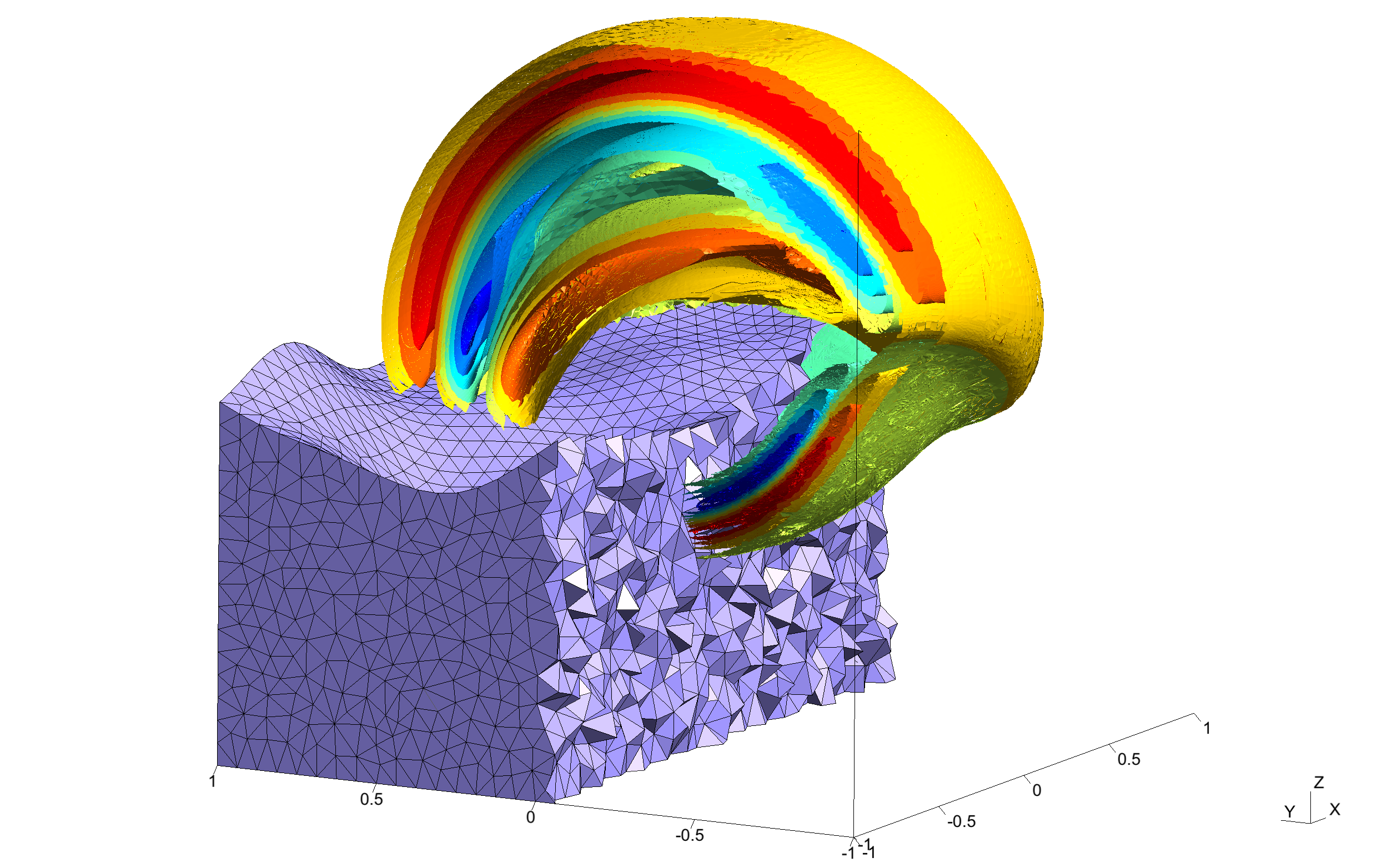}
\caption{Numerical solution of the wave equation for a hybrid wedge-tet mesh ($N=4$) with discontinuous wavespeed across a wavy interface. }
\label{fig:wavy}
\end{figure}

\section{Conclusions and future work}

Existing options for efficient low-storage discontinuous Galerkin methods on meshes with general mapped wedges are currently limited to Low-storage Curvilinear DG methods.  Since LSC-DG methods are typically more expensive and less robust to mesh perturbations than quadrature-free nodal DG methods, we have developed in this work a limited-storage nodal DG method for vertically mapped wedges to address these issues.  This method is also energy stable, in contrast to the mass-lumped methods typically employed for high order finite element methods on wedges.  Additional interpolation to quadrature points is also avoided, despite the fact that exact quadrature is used.  Finally, the method is shown to be computationally efficient, with kernels attaining good computational performance and bandwidth at high orders of approximation, while reducing the per-dof runtime relative to tetrahedral kernels.  

It is possible to achieve further speedup by taking advantage of the fact that the wedge meshes used in this work are structured in the vertical direction.  This can be exploited to increase data locality by utilizing a layer-by-layer marching algorithm and reusing data from the previous layer \cite{micikevicius20093d,bercea2015numbering}.  


\section{Acknowledgements}

The authors thank TOTAL for permission to publish.  JC and TW are funded by a grant from TOTAL E\&P Research and Technology USA.  Solutions were rendered using the high order visualization of GMSH \cite{geuzaine2009gmsh}.

\bibliographystyle{unsrt}
\bibliography{total}{}

\end{document}